\newtheorem{theorem}{Theorem}
\newtheorem{claim}[theorem]{Claim}
\newtheorem{corollary}[theorem]{Corollary}
\newtheorem{lemma}[theorem]{Lemma}
\newtheorem{proposition}[theorem]{Proposition}
\theoremstyle{definition}
\newtheorem{definition}[theorem]{Definition}
\theoremstyle{remark}
\newtheorem{remark}[theorem]{Remark}
\numberwithin{theorem}{section}
\numberwithin{equation}{section}
\def\XXint#1#2#3{{\setbox0=\hbox{$#1{#2#3}{\int}$}
     \vcenter{\hbox{$#2#3$}}\kern-.5\wd0}}
\newcommand{\bbH}{\mathbb{H}}
\newcommand{\bbR}{\mathbb{R}}
\newcommand{\bbN}{\mathbb{N}}
\begin{document}
\title[Whitney extension for curves in the Heisenberg group]{A $C^m$ Whitney Extension Theorem for Horizontal Curves in the Heisenberg Group}

\author[Andrea Pinamonti]{Andrea Pinamonti}
\address[Andrea Pinamonti]{Department of Mathematics, University of Trento, Via Sommarive 14, 38123 Povo (Trento), Italy}
\email[Andrea Pinamonti]{Andrea.Pinamonti@unitn.it}

\author[Gareth Speight]{Gareth Speight}
\address[Gareth Speight]{Department of Mathematical Sciences, University of Cincinnati, 2815 Commons Way, Cincinnati, OH 45221, United States}
\email[Gareth Speight]{Gareth.Speight@uc.edu}

\author[Scott Zimmerman]{Scott Zimmerman}
\address[Scott Zimmerman]{Department of Mathematics, University of Connecticut, 341 Mansfield Road U1009, Storrs, Connecticut 06269, United States}
\email[Scott Zimmerman]{Scott.Zimmerman@uconn.edu}

\keywords{Heisenberg group, horizontal curve, Whitney extension theorem}

\date{\today}

\begin{abstract}
We characterize those mappings from a compact subset of $\mathbb{R}$ into the Heisenberg group $\bbH^{n}$ 
which can be extended to a $C^{m}$ horizontal curve in $\mathbb{H}^{n}$. The characterization combines the classical Whitney conditions with an estimate comparing changes in the vertical coordinate with those predicted by the Taylor series of the horizontal coordinates.
\end{abstract}

\maketitle

%Generalization to step 2 or connection with pliability for Marco?
%What about horizontal $C^{\infty}$ or analytic extension for horizontal curves?
%Look at work on Whitney problems by Fefferman. Can we study some of these for Heisenberg or Carnot groups?

\section{Introduction}\label{introduction}

The classical Whitney extension theorem \cite{Bie80, Whi34} characterizes those collections of continuous functions $F=(F^{k})_{k=0}^{m}$ defined on a compact set $K\subset \bbR^{n}$ which can be extended to a $C^m$ function $f$ defined on an open set containing $K$ so that the derivatives $D^kf$ of the extension coincide with the functions $F^k$ on $K$. The collection $(F^{k})_{k=0}^{m}$ is called a \emph{jet} of order $m$ (see Definition~\ref{jet}), and we intuitively view the functions $F^k$ for $k \geq 1$ as some sort of ``derivatives'' of $F^0$. A $C^m$ extension is guaranteed to exist if the jet is a so called Whitney field of class $C^m$ on $K$ (see Definition~\ref{whitneyfield}). Intuitively, this means that the value of each $F^{k}$ on $K$ should be uniformly well approximated by the Taylor polynomial centered at any nearby point $a\in K$ computed using the jet. Taylor's theorem ensures that this approximation holds for any $C^m$ function restricted to $K$, so Whitney's theorem acts as a sort of converse to Taylor's theorem.

The main result of this paper is a $C^m$ Whitney extension theorem for mappings from compact subsets of $\bbR$ into the Heisenberg group (Theorem~\ref{iff}). Before describing our result, we first give some motivation and history related to the problem.

Whitney's classical extension theorem has many applications. For instance, it can be used to construct functions with unusual differentiability properties \cite{Whi35} and to construct $C^1$ approximations of Lipschitz mappings \cite{Fed69}. Such approximations have been used to show that rectifiable sets may be equivalently defined by Lipschitz functions or by $C^1$ functions. Recently, great attention has been devoted to the study of quantitative versions of Whitney's theorem.  More specifically, given all (or just a part) of the Whitney data on $K$, one can attempt to construct a smooth extension with some sort of reasonable estimate on the $C^m$-norm. This problem is of interest in applications, and it is highly nontrivial even in the setting of functions defined at finitely many points \cite{Feff1,Feff2,Feff3,Feff4}.

 In recent years, it has become clear that a large part of geometric analysis in Euclidean spaces may be generalized to more general settings  \cite{BLU07, Che99, CDPT07, HKST15, Mon02, Pan89}. In particular, rectifiable sets are currently under intense study in Carnot groups such as the Heisenberg group \cite{FSS01, FSS03, FSS03b}. This demonstrates the importance of understanding to what extent a version of Whitney's extension theorem holds for mappings between more general spaces.

Carnot groups are Lie groups whose Lie algebra admits a stratification. This stratification gives rise to dilations and implies that points can be connected by absolutely continuous curves with tangents in a distinguished subbundle of the tangent bundle. These are the so called horizontal curves. Considering lengths of horizontal curves gives rise to the Carnot-Carath\'{e}odory distance and endows every Carnot group with a metric space structure. Moreover, every Carnot group has a natural Haar measure which respects the group translations and dilations. This plethora of structure makes the study of analysis and geometry in Carnot groups highly interesting \cite{BLU07, CDPT07, Mon02}. However, results in the Carnot setting can be very different to Euclidean ones since all such results must respect the horizontal structure of the Carnot group. The Heisenberg group is the simplest non-Euclidean Carnot group and admits an explicit representation in $\bbR^{2n+1}$ (Definition \ref{Heisenberg}) with $2n$ horizontal directions and one vertical direction.

Validity of a Whitney extension theorem in Carnot groups has received considerable attention in recent years. The best understood case is that of mappings from Carnot groups to $\bbR$. In 2001, Franchi, Serapioni, and Serra Cassano proved a $C^1$ version of the Whitney extension theorem for mappings from the Heisenberg group into $\bbR$ \cite{FSS01}. In this theorem, the jet is defined on a compact subset $K$ of the Heisenberg group and is extended to a $C^{1}_{H}$ function. That is, the derivatives of the extension in the horizontal directions exist and are continuous. In 2006, Vodop'yanov and Pupyshev proved a $C^m$ version of the Whitney extension theorem for mappings from general Carnot groups to $\bbR$ \cite{VP06}. 

The study of a Whitney extension theorem for mappings whose target is a Carnot group is even more recent. Zimmerman \cite{Zim18} established a $C^1$ version of the Whitney extension theorem for mappings from compact subsets of $\bbR$ into the Heisenberg group. More or less at the same time, Speight \cite{Spe14} independently addressed the related problem of a Lusin type approximation of absolutely continuous horizontal curves by $C^1$ horizontal curves in the Heisenberg group. Here, it was also shown that there is no such Lusin type approximation for horizontal curves in the Engel group (which is a Carnot group of step 3). Hence, one should not expect a Whitney extension theorem for mappings from compact subsets of $\bbR$ into every Carnot group. The positive results on Lusin approximation were extended to all step 2 Carnot groups in \cite{LS16}, after which \cite{JS16, SS18} extended the $C^1$ Whitney extension theorem to mappings from compact subsets of $\bbR$ to larger classes of Carnot groups and subriemannian manifolds. 

Until now, little was known about the validity of a higher order Whitney extension theorem for mappings whose target is a non-Euclidean Carnot group. In this paper, we establish such a $C^m$ Whitney extension theorem for mappings into the Heisenberg group. For simplicity we focus on the first Heisenberg group $\bbH^{1}$, represented in coordinates as $\bbR^{3}$, but similar methods work for any Heisenberg group $\bbH^{n}$. Our result characterizes when a triple of jets $(F^{k})_{k=0}^{m}$, $(G^{k})_{k=0}^{m}$, and $(H^{k})_{k=0}^{m}$ defined on a compact subset $K\subset \bbR$ can be extended to a $C^m$ horizontal curve from $\bbR$ to $\bbH^{1}$. 

We say that a triple $(F,G,H)$ of jets of order $m$ \emph{extends to the $C^m$ horizontal curve $(f,g,h)\colon \bbR\to \bbH^{1}$} if $(f,g,h)$ is a $C^m$ horizontal curve from $\bbR$ into $\bbH^{1}$ and we have $D^kf(x)=F^{k}(x)$, $D^kg(x)=G^{k}(x)$ and $D^kh(x)=H^{k}(x)$ for all $x\in K$ and $0\leq k\leq m$. The first two conditions of our characterization (Theorem~\ref{iff}) are easy to understand. First, $f$, $g$, and $h$ must extend $F$, $G$, and $H$ as maps into $\bbR$. Hence, by Taylor's theorem, the jets $F$, $G$, and $H$ must already be Whitney fields of class $C^m$ on $K$. Secondly, differentiating in the definition of a horizontal curve gives \eqref{HigherHoriz} where polynomials $\mathcal{P}^k$ establish a relationship between the derivatives of the different components of the curve. However, these two conditions alone are not enough. This is shown in Proposition \ref{propexample}. The problem arises from the fact that the vertical component of a horizontal curve is not free to vary but is instead determined by an area swept out by the horizontal components (see Lemma~\ref{lift}). 

Presumably, it is redundant to assume that $H$ is a Whitney field in Theorem~\ref{iff}(1) 
when proving sufficiency of the assumptions.
This property is likely a result of assumptions (2) and (3) in Theorem~\ref{iff}
together with the Whitney properties of $F$ and $G$.
(For an easy proof of this fact in the case $m=1$, see Remark 1.6 in \cite{Zim18}.)
However, since the Whitney field property of $H$ is necessary for the existence of a $C^m$ extension, 
it is reasonable to include this condition.

We now describe the third condition in our characterization and state the result.
Given $a,b\in K$, define the \emph{area discrepancy}
\begin{align}\label{Aab}
 A(a,b)&:=H(b)-H(a)-2\int_{a}^{b}((T_{a}^mF)'(T_{a}^mG)-(T_{a}^mG)'(T_{a}^mF))\\
\nonumber &\qquad +2F(a)(G(b)-T_{a}^mG(b))-2G(a)(F(b)-T_{a}^mF(b)).
\end{align}
Here we use the identification $F(x) = F^0(x)$, $G(x) = G^0(x)$, and $H(x) = H^0(x)$.
The terms $T_{a}^mF$ and $T_{a}^mG$ denote the Taylor polynomials of the jets $F$ and $G$ %\footnote{Here there is a small notational issue. $F$ is vector here, but previously $F$ was a real valued function} respectively at $a$ 
(see Definition \ref{taylor}). Note that $A(a,b)$ measures over $[a,b]$ the difference between the change in height of the jets and the change in height predicted by lifting the Taylor expansion of the horizontal components. The terms on the second line of \eqref{Aab} are a result of the group operation in $\mathbb{H}^1$ when we consider points away from the origin. Intuitively, in order for a $C^m$ horizontal extension to exist, the area discrepancy $A(a,b)$ must be very small as $(b-a)\to 0$. To make this precise, we define the \emph{velocity}
\begin{equation}\label{Vab}
V(a,b):= (b-a)^{2m} + (b-a)^{m} \int_{a}^{b}|(T_{a}^mF)'|+|(T_{a}^mG)'|.
\end{equation}
In some sense, $V(a,b)$ is related to the speed of the horizontal components of the curve fragment. If the higher order terms in the jets $F$ and $G$ are large at $a$, then $V(a,b)$ is controlled by the integral term. Otherwise $V(a,b)$ is controlled by $(b-a)^{2m}$. The final condition of our characterization asserts $A(a,b)/V(a,b)\to 0$ uniformly as $(b-a) \searrow 0$ with $a,b\in K$.

We now state formally our main theorem.

\begin{theorem}\label{iff}
Let $K\subset \bbR$ be a compact set and $F$, $G$, and $H$ be jets of order $m$ on $K$. 
Then the triple $(F,G,H)$ extends to a $C^m$ horizontal curve $(f,g,h)\colon \bbR \to \bbH^{1}$ if and only if
\begin{enumerate}
\item $F$, $G$, and $H$ are Whitney fields of class $C^m$ on $K$,
\item for every $1 \leq k \leq m$ and $t\in K$ we have
 \begin{equation}
\label{HorizAssume}
H^{k}(t) = \mathcal{P}^k \left(F^0(t),G^0(t),F^1(t),G^1(t),\dots,F^{k}(t),G^{k}(t)\right),
\end{equation}
\item $A(a,b)/V(a,b)\to 0$ uniformly as $(b-a) \searrow 0$ with $a,b\in K$.
\end{enumerate}
\end{theorem}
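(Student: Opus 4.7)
Suppose $(F,G,H)$ extends to a $C^m$ horizontal curve $(f,g,h)$. Condition (1) is immediate from Taylor's theorem applied to the $C^m$ functions $f$, $g$, $h$. Condition (2) follows by differentiating the horizontality identity $h' = 2(fg' - gf')$ a total of $k-1$ times and evaluating on $K$; the resulting polynomial in the horizontal derivatives is exactly $\mathcal{P}^k$. For condition (3) I would write $f = T_a^m F + R_f$ and $g = T_a^m G + R_g$, where the Taylor remainders and their derivatives are $o((t-a)^m)$ uniformly for $a \in K$. Substituting into $H(b) - H(a) = 2\int_a^b (fg' - gf')$, expanding, and integrating by parts to move derivatives off the remainders causes the four ``main'' pieces to collapse into the expression subtracted from $H(b)-H(a)$ in \eqref{Aab}, while the residue is a sum of integrals each containing a remainder factor. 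Elementary estimates then show $|A(a,b)| = o((b-a)^{2m}) + o\bigl((b-a)^m \int_a^b(|(T_a^m F)'|+|(T_a^m G)'|)\bigr) = o(V(a,b))$.

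\textbf{Sufficiency: setup.} Apply the classical Whitney extension theorem to $F$ and $G$ to produce $\bar f, \bar g \in C^m(\bbR)$ with $\bar f^{(k)}|_K = F^k$ and $\bar g^{(k)}|_K = G^k$ for $0 \le k \le m$. Fix a basepoint $a_0 \in K$ and define
\[
\bar h(t) := H(a_0) + 2\int_{a_0}^t (\bar f\, \bar g' - \bar g\, \bar f'),
\]
so that $(\bar f, \bar g, \bar h)$ is a $C^m$ horizontal curve on $\bbR$ by construction. Assumption (2), applied inductively to the identities obtained by differentiating $\bar h' = 2(\bar f \bar g' - \bar g \bar f')$, then yields $\bar h^{(k)}(t) = H^k(t)$ for every $t \in K$ and every $1 \le k \le m$. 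It therefore remains only to correct the zeroth-order discrepancy $e(t) := H(t) - \bar h(t)$ on $K$.

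\textbf{The correction.} Write $\bbR \setminus K$ as a countable disjoint union of open intervals. On each bounded gap $I_j = (a_j, b_j)$ with $a_j, b_j \in K$, add perturbations $\alpha_j, \beta_j$ supported in $\overline{I_j}$ and vanishing to order $m$ at both endpoints, chosen so that the modified horizontal curve produces exactly $e(b_j) - e(a_j)$ of extra vertical displacement over $I_j$. After integration by parts, this extra displacement equals $-4\int_{I_j}\beta_j \bar f' + 4\int_{I_j}\alpha_j \bar g' + 2\int_{I_j}(\alpha_j\beta_j' - \beta_j\alpha_j')$. I would use templates $\alpha_j(t) = \lambda_j (b_j-a_j)^{m+1}\phi\bigl((t-a_j)/(b_j-a_j)\bigr)$ and analogously for $\beta_j$, with fixed $C^m$ bumps $\phi,\psi$ on $[0,1]$ vanishing to order $m$ at the endpoints and with $\int_0^1(\phi\psi' - \psi\phi') \neq 0$. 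Depending on whether $V(a_j,b_j)$ is dominated by $(b_j-a_j)^{2m}$ or by $(b_j-a_j)^m$ times one of the derivative integrals, the prescribed area is realized through the bilinear ``loop'' term or through the appropriate linear piece, and $\lambda_j$ (with correct sign) is fixed by the resulting scalar equation. A routine computation shows $e(b_j)-e(a_j) = A(a_j,b_j) + o(V(a_j,b_j))$, so the prescription can be phrased equivalently in terms of $A$ alone.

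\textbf{Main obstacle.} The central technical point is that $(f,g) := (\bar f + \sum_j\alpha_j,\, \bar g + \sum_j\beta_j)$ must be globally $C^m$, which reduces to showing that $f^{(m)}$ and $g^{(m)}$ are continuous at each point of $K$ accumulated by gaps. The template scaling gives $\|\alpha_j^{(m)}\|_\infty \lesssim |\lambda_j|(b_j-a_j)$, so one must control $|\lambda_j|(b_j-a_j)$. Solving the scalar equation for $\lambda_j$ and inserting the bound $|A(a_j,b_j)| \le \eta_j V(a_j,b_j)$ with $\eta_j \to 0$ from (3) yields $\|\alpha_j^{(m)}\|_\infty \lesssim \eta_j$ in the linear regime and $\lesssim \sqrt{\eta_j}$ in the loop regime, which is exactly the uniform vanishing that is required. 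After handling the two unbounded components of $\bbR\setminus K$ by one-sided Taylor matching and noting that $h$ defined through horizontality automatically satisfies $h^{(k)}|_K = H^k$ thanks to (2) and the zeroth-order correction of $e$, the desired $C^m$ horizontal extension is produced.
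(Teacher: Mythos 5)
Your necessity argument is fine, and in fact takes a genuinely different (and arguably cleaner) route than the paper: by writing $f=T_a^mF+r$, $g=T_a^mG+s$ and integrating by parts, the boundary terms $r(b)T_a^mG(b)$, $s(b)T_a^mF(b)$ combine with the correction terms $2F(a)(G(b)-T_a^mG(b))-2G(a)(F(b)-T_a^mF(b))$ in \eqref{Aab} into $2r(b)(T_a^mG(b)-G(a))-2s(b)(T_a^mF(b)-F(a))$, which is controlled by $\int_a^b|(T_a^mG)'|+|(T_a^mF)'|$; this avoids both the group-translation normalization and the Markov-type estimate (Corollary~\ref{intmax}) that the paper uses there. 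The sufficiency half, however, has a genuine gap at its technical core. In the regime where $V(a_j,b_j)$ is dominated by $(b_j-a_j)^m\int_{a_j}^{b_j}|(T_{a_j}^mF)'|$, you must solve the scalar equation $4\int_{a_j}^{b_j}\beta_j\,\bar f'=\mathcal{A}_j$ with $\|\beta_j^{(m)}\|_\infty\to 0$ as the gap length shrinks, and this requires a lower bound of the form $\bigl|\int_{a_j}^{b_j}\beta_j\,\bar f'\bigr|\gtrsim\|\beta_j\|_\infty\int_{a_j}^{b_j}|(T_{a_j}^mF)'|$. With a \emph{fixed} template bump this is false for $m\ge 2$: the functional $P\mapsto\int_0^1\psi(u)\,P(a_j+(b_j-a_j)u)\,du$ has a nontrivial kernel on the space of polynomials of degree $m-1$, so $(T_{a_j}^mF)'$ can have arbitrarily large $L^1$ norm (hence large $V$ and a correspondingly large admissible $\mathcal{A}_j$) while pairing with your bump to essentially zero; the equation for $\lambda_j$ is then unsolvable or forces $\|\beta_j^{(m)}\|_\infty$ of order one, which destroys continuity of the $m$-th derivative at accumulation points of $K$. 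This is exactly why the paper proves Lemma~\ref{Pbig} from Markov's inequality and, in Lemma~\ref{fbig}, places a bump of matching sign on a subinterval of length at least $(b_j-a_j)/4m^2$ where $|(T_{a_j}^mF)'|\ge\frac12\max|(T_{a_j}^mF)'|$; some adaptation of the perturbation to the polynomial is unavoidable, and your proposal contains no mechanism for it.

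Two further points, less central but not cosmetic. In the ``loop'' regime your bound $\|\alpha_j^{(m)}\|_\infty\lesssim\sqrt{\eta_j}$ presumes the area is produced by the bilinear term alone, but with perturbations of size $\sqrt{\eta_j}(b_j-a_j)^m$ the linear terms $\int\beta_j\bar f'$, $\int\alpha_j\bar g'$ can be of size $\sqrt{\eta_j}(b_j-a_j)^{2m}$, i.e.\ much larger than the loop term $\sim\eta_j(b_j-a_j)^{2m}$, with uncontrolled sign; the paper needs the case analysis and intermediate value argument of Lemma~\ref{fgsmall} to handle this interference, and your single scalar equation does not. Finally, your closing claim that the corrected lift satisfies $h=H$ on all of $K$ is not automatic from correcting $e$ across gaps: you need that the increments of $e=H-\bar h$ over $K$ are exhausted by its gap increments. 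This is true — it follows from a chaining argument using $e(b)-e(a)=o(b-a)$ uniformly, which comes from conditions (1) and (2) — but it is asserted rather than proved; the paper sidesteps it entirely by setting $h=H$ on $K$ and verifying the pointwise horizontality identity of the glued $C^m$ curve (Proposition~\ref{conclusion}).
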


First note that Theorem \ref{iff} is consistent with the $C^1$ case from \cite{Zim18}, even if condition (3) appears slightly different to the corresponding condition in \cite{Zim18}. This is described in Section \ref{C1}. Our proof of Theorem \ref{iff} when specialized to the $C^1$ case is very different to that of \cite{Zim18}. Hence we also obtain a new proof of the $C^1$ case. We now briefly describe the structure of the proof.

Necessity of the three conditions in Theorem \ref{iff} is established in Proposition \ref{nec}. As already described, (1) follows from Taylor's theorem, and (2) follows from differentiating the definition of a horizontal curve in Lemma~\ref{lift}. To establish (3), we assume $f(a)=g(a)=h(a)=0$ and combine the definition of a horizontal curve with direct estimates on $\int_{a}^{b}|f'g-(Tf)'Tg|$. To remove the assumption $f(a)=g(a)=h(a)=0$, we simply apply group translations.

Sufficiency of the three conditions is more involved and is established by Theorem~\ref{newthm}. We begin with the decomposition $[\min K, \ \max K]\setminus K=\cup_{i\geq 1}(a_{i},b_{i})$. The main step will be to obtain Lemma~\ref{buildcurve}. This lemma provides $C^m$ horizontal curves $(\mathcal{F}_{i}, \mathcal{G}_{i}, \mathcal{H}_{i})$ defined on each interval $[a_{i},b_{i}]$ whose derivatives agree with the values prescribed by $(F,G,H)$ at the endpoints, the derivatives do not deviate far from these values along the entire interval, and the areas enclosed by $\mathcal{F}_i$ and $\mathcal{G}_i$ in the plane are chosen so that, when they are lifted to a horizontal curve in the Heisenberg group, the change in height $\mathcal{H}_i(b_i) - \mathcal{H}_i(a_i)$ is equal to $H(b_i) - H(a_i)$. Once these curves are constructed, one can glue such curves to obtain the required extension of $(F,G,H)$ (Proposition \ref{conclusion}). To establish Lemma \ref{buildcurve}, we begin by using the classical Whitney extension theorem to extend the jets $F$ and $G$ to $C^m$ functions $f$ and $g$ whose derivatives take the correct values on $K$. On each interval $[a_{i},b_{i}]$ we then construct perturbations $\phi$ and $\psi$ whose derivatives vanish at the endpoints and are uniformly small throughout $(a_{i},b_{i})$ so that lifting the curve $(f+\phi, g+\psi)$ creates a horizontal curve whose height agrees with $H$ at the endpoints of $[a_{i},b_{i}]$. The difficulty arises in ensuring that these new curves actually meet the image of $K$ at the correct height. Fortunately, the condition on $A(a,b)/V(a,b)$ controls the change in height over $K$ and therefore guarantees that small enough perturbations can in fact be constructed despite the constraints on them. To build perturbations large enough to meet the height requirements, inequalities such as
\[\int_{a}^{b} \psi (Tf)'\geq C\beta (b-a)^{m}\int_{a}^{b}|(Tf)'|\]
are necessary, where $\beta$ is the bound on the derivatives of $\psi$ and $C$ is some constant. To obtain this, we use Markov's inequality to control the behavior of polynomials such as $Tf$ (Lemma~\ref{Pbig}) which asserts that the largest values of a polynomial cannot be overly concentrated in too small a region.

The structure of the paper is as follows. In Section~\ref{prelim} we recall preliminaries on the Heisenberg group, the classical Whitney extension theorem, and useful inequalities for polynomials. In Section~\ref{consistency} we show that Theorem~\ref{iff} is consistent with previous results for the $C^1$ case. In Section~\ref{example} we give an example of jets satisfying conditions (1) and (2) of Theorem~\ref{iff} for which no $C^m$ horizontal extension exists. In Section~\ref{sectionnecessary} we prove the easier implication of necessity of the conditions in Theorem \ref{iff}. Finally, in Section~\ref{sectionsufficient} we prove that the conditions of Theorem~\ref{iff} are sufficient for the existence of a $C^m$ horizontal extension.

\begin{remark}
To simplify notation we have restricted our attention to the first Heisenberg group $\mathbb{H}^1$. Theorem \ref{iff} and its proof generalize to the setting of curves in an arbitrary Heisenberg group $\bbH^{n}$, which is identified with $\bbR^{2n+1}$ in coordinates. In this setting, $F_{1},\ldots, F_{n}, G_{1}, \ldots, G_{n}$, and $H$ are jets of order $m$ on $K$, and our aim is to extend $(F_{1}, \ldots, F_{n}, G_{1}, \ldots, G_{n}, H)$ to a $C^m$ horizontal curve in $\bbH^{n}$. Condition (1) of Theorem \ref{iff} is replaced by the requirement that $F_{1},\ldots, F_{n},G_{1}, \ldots, G_{n}$, and $H$ are all Whitney fields of class $C^m$ on $K$. Condition (2) is substituted with a related condition on the polynomials $\mathcal{P}^k$ resulting from differentiating the horizontality condition for a curve $\gamma$ in $\bbH^{n}$:
\begin{equation}\label{horizHn}
\gamma_{2n+1}'(t)=2\sum_{i=1}^{n} (\gamma_{i}'(t)\gamma_{n+i}(t)-\gamma_{n+i}'(t)\gamma_{i}(t)).
\end{equation}
Condition (3) takes the same form except the definitions of $A(a,b)$ and $V(a,b)$ are modified to reflect the fact that \eqref{horizHn} is a sum of $n$ terms in $\bbH^n$, each of which is very similar to the one term which appears in $\bbH^{1}$.

The proof of necessity in $\bbH^{n}$ is essentially the same as before except each commutator term in the new definition of $A(a,b)$ is estimated separately, then the estimates are added together. The proof of sufficiency is $\bbH^{n}$ also follows the same ideas. Each of the horizontal terms $F_{1}, \ldots, F_{n}$ and $G_{1},\ldots, G_{n}$ is extended separately to a $C^m$ map. The analogue of Proposition \ref{perturb} then constructs perturbations for each horizontal term so that lifting the horizontal terms gives the right boundary conditions for $H$ in Proposition \ref{perturb}(3). Actually, one only needs to perturb at most two coordinates, depending on which terms are large in the expression for $V(a,b)$. Once these interpolating maps are constructed, the remainder of the proof of Theorem \ref{iff} in $\bbH^{n}$ is essentially the same as before. 
\end{remark}

It is natural to ask whether the ideas in the present paper can be applied in more general Carnot groups. We intend to investigate this problem in the future.

\vspace{.3cm}

\textbf{Acknowledgements:} Part of this work was done while A. Pinamonti was visiting the University of Cincinnati. This visit was partly supported by a Research Support Grant from the Taft Research Center at the University of Cincinnati.
A. P. is a member of {\em Gruppo Nazionale per l'Analisi Ma\-te\-ma\-ti\-ca, la Probabilit\`a e le loro Applicazioni} (GNAMPA) of the {\em Istituto Nazionale di Alta Matematica} (INdAM).

\section{Preliminaries}\label{prelim}

\subsection{The Heisenberg group}

\begin{definition}\label{Heisenberg}
The Heisenberg group $\mathbb{H}^{n}$ is the Lie group represented in coordinates by $\mathbb{R}^{2n+1}$, whose points we denote by $(x,y,t)$ with $x,y\in \mathbb{R}^{n}$ and $t\in \mathbb{R}$. The group law is given by:
\[(x,y,t) (x',y',t')=\left(x+x',y+y',t+t'+2\sum_{i=1}^{n}(y_{i}x_{i}'-x_{i}y_{i}')\right).\]
\end{definition}

We equip $\mathbb{H}^{n}$ with left invariant vector fields
\begin{equation}\label{heisenbergvectors}
X_{i}=\partial_{x_{i}}+2y_{i}\partial_{t}, \quad Y_{i} = \partial_{y_{i}}-2x_{i}\partial_{t}, \quad 1\leq i\leq n, \quad T=\partial_{t}.
\end{equation}
Here $\partial_{x_{i}}, \partial_{y_{i}}$ and $\partial_{t}$ denote the coordinate vectors in $\mathbb{R}^{2n+1}$, which may be interpreted as operators on differentiable functions. If $[\cdot, \cdot]$ denotes the Lie bracket of vector fields, then $[X_{i}, Y_{i}]=-4T$. Thus $\mathbb{H}^{n}$ is a Carnot group with horizontal layer $\mathrm{Span}\{X_{i}, Y_{i} \colon 1\leq i\leq n\}$ and second layer $\mathrm{Span}\{T\}$.

\begin{definition}
A vector in $\mathbb{R}^{2n+1}$ is horizontal at $p \in \mathbb{R}^{2n+1}$ if it is a linear combination of the vectors $X_{i}(p), Y_{i}(p), 1\leq i\leq n$.

An absolutely continuous curve $\gamma$ in the Heisenberg group is horizontal if, at almost every point $t$, the derivative $\gamma'(t)$ is horizontal at $\gamma(t)$.
\end{definition}

\begin{lemma}\label{lift}
An absolutely continuous curve $\gamma\colon [a,b]\to \mathbb{R}^{2n+1}$ is a horizontal curve in the Heisenberg group if and only if, for $t\in [a,b]$:
\[\gamma_{2n+1}(t)=\gamma_{2n+1}(a)+2\sum_{i=1}^{n}\int_{a}^{t} (\gamma_{i}'\gamma_{n+i}-\gamma_{n+i}'\gamma_{i}).\]
\end{lemma}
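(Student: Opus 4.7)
The plan is to reduce horizontality to a pointwise differential identity on the last coordinate, and then pass between the differential and integral forms via the fundamental theorem of calculus for absolutely continuous functions.

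First I would unpack the horizontality condition at a single point. At $p=(x,y,t)\in\bbR^{2n+1}$, the vectors $X_i(p)$ and $Y_i(p)$ from \eqref{heisenbergvectors} read, in coordinates,
\[
X_i(p)=e_i+2y_i e_{2n+1},\qquad Y_i(p)=e_{n+i}-2x_i e_{2n+1},
\]
where $e_j$ is the $j$-th standard basis vector of $\bbR^{2n+1}$. Thus a vector $v\in\bbR^{2n+1}$ is horizontal at $p$ if and only if there exist scalars $a_i,b_i$ with $v_i=a_i$, $v_{n+i}=b_i$ and $v_{2n+1}=2\sum_i(y_i a_i-x_i b_i)$. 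Eliminating $a_i,b_i$, this is equivalent to the single scalar identity
\[
v_{2n+1}=2\sum_{i=1}^{n}(v_i\,y_i-v_{n+i}\,x_i).
\]

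Applying this with $p=\gamma(t)$ and $v=\gamma'(t)$ (which exists a.e.\ since $\gamma$ is absolutely continuous), horizontality of $\gamma$ is equivalent to
\begin{equation}\label{horizODE}
\gamma_{2n+1}'(t)=2\sum_{i=1}^{n}\bigl(\gamma_i'(t)\gamma_{n+i}(t)-\gamma_{n+i}'(t)\gamma_i(t)\bigr)\qquad\text{for a.e.\ }t\in[a,b].
\end{equation}

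To finish, I would use the fact that an absolutely continuous function is the integral of its (a.e.-defined) derivative. Since each $\gamma_j$ is absolutely continuous and bounded, each product $\gamma_i'\gamma_{n+i}$ and $\gamma_{n+i}'\gamma_i$ is integrable on $[a,b]$, and so is the right hand side of \eqref{horizODE}. If $\gamma$ is horizontal, integrating \eqref{horizODE} from $a$ to $t$ yields the desired formula. Conversely, if the integral formula holds for every $t\in[a,b]$, then $\gamma_{2n+1}$ equals the indefinite integral of the right hand side of \eqref{horizODE} plus a constant, so differentiating at a.e.\ $t$ recovers \eqref{horizODE}, and hence $\gamma$ is horizontal. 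There is no real obstacle here; the only thing to be slightly careful about is verifying that the integrand is indeed in $L^1$ so that the fundamental theorem of calculus applies in both directions, which follows at once from the absolute continuity and hence boundedness of the components $\gamma_j$ on $[a,b]$.
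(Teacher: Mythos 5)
Your proof is correct. The paper states Lemma~\ref{lift} without proof, treating it as a standard fact, and your argument --- reducing horizontality at a point to the single scalar identity $v_{2n+1}=2\sum_i(v_i\gamma_{n+i}-v_{n+i}\gamma_i)$ on the last coordinate using the vector fields \eqref{heisenbergvectors}, and then passing between the a.e.\ differential identity and the integral form via the fundamental theorem of calculus for absolutely continuous functions (with the $L^1$ check on the integrand) --- is exactly the standard argument the paper implicitly relies on, e.g.\ in the displayed a.e.\ identity immediately following the lemma.
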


The integrals above have a geometric interpretation; if the curve starts at $0$ and is smooth enough to apply Stokes' theorem, then each gives a signed area in $\mathbb{R}^2$. 

\begin{lemma}\label{area}
Suppose $\sigma\colon [a,b]\to \mathbb{R}^{2}$ is a smooth curve with $\sigma(a)=0$. Let $[0,\sigma(b)]$ be the straight line from $0$ to $\sigma(b)$ and let $A_{\sigma}$ denote the signed area of the region enclosed by $\sigma$ and $[0,\sigma(b)]$. Then:
\[A_{\sigma}=\frac{1}{2}\int_{a}^{b}(\sigma_{1}\sigma_{2}'-\sigma_{2}\sigma_{1}').\]
\end{lemma}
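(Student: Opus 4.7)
The plan is to derive the identity from Green's theorem (equivalently, the shoelace/Stokes formula for signed area) applied to the closed curve $\Gamma$ obtained by concatenating $\sigma$ with the straight segment $[\sigma(b),0]$ traversed from $\sigma(b)$ back to the origin. Since $\sigma(a)=0$, this produces a piecewise smooth closed loop in $\mathbb{R}^2$, and by definition the signed area it encloses is $A_\sigma$.

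First I would recall the standard line-integral expression for signed area,
\[
A_\sigma = \frac{1}{2}\oint_{\Gamma}(x\,dy - y\,dx),
\]
which is a direct consequence of Green's theorem applied to the $1$-form $\tfrac{1}{2}(x\,dy - y\,dx)$, whose exterior derivative is the area form $dx\wedge dy$. Next I would split the contour integral over the two pieces of $\Gamma$: the contribution from $\sigma$, parametrized by $t\in[a,b]$, is
\[
\frac{1}{2}\int_a^b(\sigma_1\sigma_2' - \sigma_2\sigma_1'),
\]
which is precisely the right-hand side of the lemma.

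It remains to verify that the contribution from the straight segment vanishes. I would parametrize it as $L(s)=(1-s)\sigma(b)$ for $s\in[0,1]$, so writing $\sigma(b)=(p,q)$ yields $x=(1-s)p$, $y=(1-s)q$, $dx=-p\,ds$, $dy=-q\,ds$. A direct substitution gives
\[
x\,dy - y\,dx = -(1-s)pq\,ds + (1-s)qp\,ds = 0,
\]
so the segment contributes nothing to the contour integral. This is the only computational step, and it is routine; I expect no genuine obstacle beyond making sure the orientation convention for the signed area matches the one implicit in the statement. Combining the two contributions yields the stated formula.
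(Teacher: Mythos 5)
Your proof is correct: closing the curve with the segment back to the origin, applying Green's theorem to the form $\tfrac{1}{2}(x\,dy-y\,dx)$, and checking that the radial segment contributes nothing (since $x\,dy-y\,dx$ vanishes along any line through the origin) is exactly the standard argument. The paper states this lemma without proof, merely alluding to Stokes' theorem in the surrounding text, so your write-up simply supplies the intended routine verification.
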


Clearly Lemma \ref{lift} implies that for any horizontal curve $\gamma$ we have
$$
\gamma_{2n+1}'(t) = 2\sum_{i=1}^{n} (\gamma_{i}'(t)\gamma_{n+i}(t)-\gamma_{n+i}'(t)\gamma_{i}(t)) 
\quad
\text{for a.e. } t \in [a,b].
$$ 
If we assume that $\gamma$ is $C^1$,
this equality holds for every $t \in [a,b]$.
If we further assume that $\gamma$ is $C^m$ for some $m > 1$,
then, 
for $1 \leq k \leq m$, 
we may write 
\begin{equation}
\label{HigherHoriz}
D^k\gamma_{2n+1}(t) = \sum_{i=1}^{n} \mathcal{P}^k\left(\gamma_{i}(t),\gamma_{n+i}(t),\gamma_{i}'(t),\gamma_{n+i}'(t),\dots,D^k\gamma_{i}(t),D^k\gamma_{n+i}(t)\right)
\end{equation}
for all $t\in [a,b]$ where $\mathcal{P}^k$ is a polynomial determined by the Leibniz rule.

\subsection{Jets and the classical Whitney extension theorem}

\begin{definition}\label{jet}
A \emph{jet} of order $m$ on a set $K\subset \mathbb{R}$ consists of a collection 
of continuous functions $F=(F^{k})_{k=0}^{m}$ on $K$. We denote the space of such jets by $J^{m}(K)$ and write $F(x)=F^{0}(x)$ for $x\in K$.
\end{definition}

For an open set $U \subset \mathbb{R}$,
we define the mapping $J^m:C^m(U) \to J^m(U)$, which sends a $C^m$ function on $U$ to the jet on $U$
consisting of derivatives up to order $m$,
as $J^m(F) = (D^k F)_{k=0}^m$ for $F \in C^m(U)$. Here, $D^k$ is the $k$th derivative of $F$. 
%Similarly, if $F \in J^m(K)$ is a jet on a set $K$, then we define $D^{k}F=(F^{k+l})_{l=0}^{m-k}$ for $k\in \mathbb{N}$.

\begin{definition}\label{taylor}
Given $a\in K$ and $F \in J^{m}(K)$, the \emph{Taylor polynomial of order $m$ of $F$ at $a$} is
\[T_{a}^{m}F(x)=\sum_{k=0}^{m} \frac{F^{k}(a)}{k!}(x-a)^{k}
\quad
\text{for all } x \in \mathbb{R}.\]
If $m$ or $a$ are clear, we write $T_{a}F$ or even $TF$ for the Taylor polynomial.
\end{definition}

Jets of the same order are added and subtracted term by term. For jets $F\in J^{m}(K)$, we will sometimes use the notation $R_{a}^{m}F=F-J^{m}(T_{a}^{m}F)$. 
This gives for $0\leq k\leq m$
\[(R_{a}^{m}F)^{k}(x)=F^{k}(x)-\sum_{\ell=0}^{m-k}\frac{F^{k+\ell}(a)}{\ell!}(x-a)^{\ell}
\quad
\text{for all } x \in \mathbb{R}.\]

\begin{definition}\label{whitneyfield}
A jet $F\in J^{m}(K)$ is a \emph{Whitney field of class $C^m$ on $K$} if, for every $0\leq k\leq m$, we have
\[(R_{a}^{m}F)^{k}(b)=o(|a-b|^{m-k})\]
as $|a-b|\to 0$ with $a,b \in K$.
\end{definition}

The following theorem is the classical Whitney extension theorem \cite{Whi34}.

\begin{theorem}[Classical Whitney extension theorem]\label{classicalWhitney}
Let $K$ be a closed subset of an open set $U\subset \mathbb{R}$. 

Then there is a continuous linear mapping $W$ from the space of Whitney fields of class $C^m$ on $K$ to $C^m(U)$ such that
\[D^{k}(WF)(x)=F^{k}(x) \quad \mbox{for $0\leq k\leq m$ and $x\in K$},\]
and $WF$ is $C^{\infty}$ on $U\setminus K$.
\end{theorem}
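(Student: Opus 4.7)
The plan is to construct the extension operator $W$ by the classical partition-of-unity argument on a Whitney decomposition of the open complement $U\setminus K$. First I would produce a countable, locally finite cover $\{Q_i\}$ of $U\setminus K$ by closed intervals satisfying $\mathrm{diam}(Q_i)\leq \mathrm{dist}(Q_i,K)\leq 4\,\mathrm{diam}(Q_i)$ and with bounded overlap, together with a subordinate smooth partition of unity $\{\varphi_i\}$ whose derivatives obey $|D^k\varphi_i|\leq C_k\,\mathrm{diam}(Q_i)^{-k}$. For each $i$, select a point $a_i\in K$ realizing $\mathrm{dist}(Q_i,K)$, and then define
\[
(WF)(x)=\begin{cases} F^0(x) & \text{if } x\in K,\\ \displaystyle\sum_i \varphi_i(x)\,T_{a_i}^m F(x) & \text{if } x\in U\setminus K.\end{cases}
\]
Linearity of $F\mapsto WF$ is immediate, and the $C^\infty$ property on $U\setminus K$ is clear because near any such point the sum is finite and each summand is a polynomial multiplied by a smooth function.

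The main obstacle is verifying, at each $x_0\in K$, that $WF$ is $m$ times differentiable with $D^k(WF)(x_0)=F^k(x_0)$ and that these derivatives extend continuously to $K$. The strategy is to estimate the difference $D^k(WF)(x)-T_{x_0}^m F^k(x)$ for $x$ near $x_0$. For $x\in K$ this reduces directly to the Whitney field hypothesis on $F$. For $x\in U\setminus K$, I would apply the Leibniz rule
\[
D^k(WF)(x)=\sum_i \sum_{\ell=0}^{k}\binom{k}{\ell}\,D^\ell\varphi_i(x)\,D^{k-\ell}(T_{a_i}^m F)(x),
\]
then use $\sum_i\varphi_i\equiv 1$ on $U\setminus K$ together with $\sum_i D^\ell\varphi_i\equiv 0$ for $\ell\geq 1$ to subtract $T_{x_0}^m F^k(x)$ termwise from the sum. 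Each resulting summand becomes a product of $D^\ell\varphi_i(x)$ with a difference of derivatives of $T_{a_i}^m F$ and $T_{x_0}^m F$, which can be bounded using the remainder notation $R_{a_i}^m F$ applied between the two points $a_i,x_0\in K$. Combining the derivative bounds for $\varphi_i$ with the comparability $|x-a_i|\sim|x-x_0|$ on $\mathrm{supp}\,\varphi_i$ and the Whitney field hypothesis should produce a bound of order $o(|x-x_0|^{m-k})$, which is precisely what is needed.

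Finally, continuity of $W$ as a linear map from the space of Whitney fields of class $C^m$ on $K$ to $C^m(U)$ follows from the same estimates: all constants depend only on $m$ and the chosen Whitney decomposition, so the $C^m$-norm of $WF$ on compact subsets of $U$ is controlled linearly by the relevant Whitney seminorms of $F$.
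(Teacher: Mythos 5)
Your sketch is the standard Whitney decomposition/partition-of-unity construction, which is precisely the proof the paper relies on by citation (Whitney \cite{Whi34}, Bierstone \cite{Bie80}); the paper itself does not reprove the theorem but only records consequences of that construction, namely the estimates \eqref{modulus1} and \eqref{modulus2}. So your approach is correct and essentially the same as the (cited) proof, with the key step being exactly the termwise comparison of $T_{a_i}^mF$ with $T_{x_0}^mF$ via the Whitney remainder bounds on the supports of the $\varphi_i$.
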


In other words, 
given a Whitney field $F=(F^k)_{k=0}^m$ of class $C^m$ on $K$,
there is a function $f = WF \in C^m(U)$ so that $D^kf = F^k$ on $K$ for $0\leq k\leq m$.
We now record two consequences of the proof of Theorem \ref{classicalWhitney} from \cite[p150]{Bie80} which will be useful later. 
(See also \cite{Folland}.)
Let $K\subset \mathbb{R}$ be a compact set.

\begin{enumerate}
\item
In this paper, we will define a \emph{modulus of continuity} to be
an increasing function $\alpha\colon [0,\infty)\to [0,\infty)$ with $\alpha(0)=0$ and $\alpha(t)\to 0$ as $t\searrow 0$. 
For any $C^m$ Whitney field $F$ on $K$, there exists a modulus of continuity $\alpha$ such that
\begin{equation}\label{modulus1}
|(R_{a}^{m}F)^{k}(x)|\leq \alpha(|x-a|)|x-a|^{m-k}
\end{equation}
for all $a, x\in K$ and $0\leq k\leq m$. 
\item
Let $U$ be a bounded open set containing $K$ and $f=WF$ be the Whitney extension constructed in the proof of Theorem \ref{classicalWhitney} in \cite{Bie80}. Then there exists a constant $C$ such that
\begin{equation}\label{modulus2}
|D^kf(x)-D^{k}(T_{a}^{m}F)(x)| \leq C \alpha(|x-a|)|x-a|^{m-k}
\end{equation}
for all $a\in K$, $x\in U$, and $0\leq k \leq m$.
\end{enumerate}

\subsection{Inequalities for polynomials}

We recall Markov's inequality for polynomials and prove some elementary consequences \cite{Mar89, Sha04}.

%\begin{lemma}[Markov Inequality] Suppose $P$ is a polynomial of degree $n$. Then \[ \max_{[-1,1]}|P'|\leq n^{2} \max_{[-1,1]}|P|.\] \end{lemma}

\begin{lemma}[Markov Inequality]\label{Markov}
Let $P$ be a polynomial of degree $n$ and $a < b$. Then
\[ \max_{[a,b]}|P'|\leq \frac{2n^{2}}{b-a} \max_{[a,b]}|P|.\]
\end{lemma}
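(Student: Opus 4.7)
The plan is to first normalize to the interval $[-1,1]$ via a linear change of variables, then establish the sharp bound on $[-1,1]$ using the extremal properties of Chebyshev polynomials of the first kind.

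For the reduction, given a polynomial $P$ of degree $n$ on $[a,b]$, I would define $Q(t) = P\!\left(\tfrac{a+b}{2} + \tfrac{b-a}{2}t\right)$ for $t\in[-1,1]$. The chain rule gives $Q'(t) = \tfrac{b-a}{2}\,P'\!\left(\tfrac{a+b}{2} + \tfrac{b-a}{2}t\right)$, so
\[
\max_{[-1,1]}|Q| = \max_{[a,b]}|P| \quad\text{and}\quad \max_{[-1,1]}|Q'| = \frac{b-a}{2}\max_{[a,b]}|P'|.
\]
Hence it suffices to prove the scale-invariant form $\max_{[-1,1]}|Q'| \leq n^2 \max_{[-1,1]}|Q|$ for every polynomial $Q$ of degree at most $n$, and the factor $\tfrac{2n^2}{b-a}$ in the statement then falls out automatically.

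For the core inequality on $[-1,1]$, the idea is to exploit the Chebyshev polynomials $T_n$, defined by $T_n(\cos\theta) = \cos(n\theta)$, which satisfy $\|T_n\|_\infty = 1$ on $[-1,1]$ while $T_n'(1) = n^2$; this already shows the constant cannot be improved, so one expects $T_n$ to be the extremal case. One clean route is to fix $x_0 \in [-1,1]$, write $Q$ by Lagrange interpolation at the $n+1$ Chebyshev extrema $y_k = \cos(k\pi/n)$ (where $|T_n(y_k)| = 1$), differentiate at $x_0$, and bound the resulting sum using $|Q(y_k)| \leq \|Q\|_\infty$ together with explicit evaluations of the Lagrange basis derivatives at Chebyshev nodes. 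An alternative route is a contradiction argument: if $|Q'(x_0)| > n^2$ for some $Q$ with $\|Q\|_\infty \leq 1$, then for a suitable scalar $\lambda$ the polynomial $T_n - \lambda Q$ would oscillate more than $n$ times between values forcing $T_n' - \lambda Q'$ to have more zeros than its degree allows.

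The main obstacle is the sharp bookkeeping that produces exactly $n^2$ rather than a weaker constant such as $Cn^2$: this requires either the precise derivative identities for the Lagrange basis at Chebyshev nodes, or the careful oscillation count comparing the candidate $Q$ against $T_n$. Since Markov's inequality is entirely classical and a fully self-contained derivation is available in the cited sources \cite{Mar89, Sha04}, my proposal is to quote the inequality from there rather than reprove it in the body of the paper, and to devote our effort to the elementary consequences needed later (e.g.\ Lemma~\ref{Pbig}), where the polynomial $P$ and the relevant sub-interval are tailored to the proof of Lemma~\ref{buildcurve}.
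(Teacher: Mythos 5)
You have correctly recognized that the paper does not prove Lemma~\ref{Markov} at all: it is quoted as a classical result with references to \cite{Mar89, Sha04}, which is exactly the course of action your proposal settles on, so your treatment matches the paper's. Your affine normalization $Q(t)=P\bigl(\tfrac{a+b}{2}+\tfrac{b-a}{2}t\bigr)$ correctly converts the sharp bound $\max_{[-1,1]}|Q'|\leq n^2\max_{[-1,1]}|Q|$ into the stated constant $\tfrac{2n^2}{b-a}$, and your Chebyshev sketch is a standard valid outline of the classical proof, so there is nothing to correct.
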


%\begin{proof} Setting $\hat{P}(x) = P\left(\frac{(b-a)x}{2}+(a+\frac{b-a}{2}) \right)$ and applying the Markov inequality gives the result. \end{proof}

\begin{lemma}\label{Pbig}
Let $P$ be a polynomial of degree $n$ and $a<b$. Let $M=\max_{[a,b]}|P|$. Then there exists a closed subinterval $I\subset [a,b]$ of length at least $(b-a)/4n^2$ such that $|P(x)|\geq M/2$ for all $x\in I$.
\end{lemma}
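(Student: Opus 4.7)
The plan is to combine Markov's inequality (Lemma \ref{Markov}) with a mean value argument. Since $[a,b]$ is compact and $P$ is continuous, choose $x_0 \in [a,b]$ such that $|P(x_0)| = M$. By Markov's inequality, we have the uniform bound $|P'(x)| \leq \frac{2n^2}{b-a} M$ for every $x \in [a,b]$.

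Next I would set $r = (b-a)/(4n^2)$ and take $I$ to be the intersection $[x_0 - r, x_0 + r] \cap [a,b]$. For any $x \in I$, the mean value theorem (applied to $P$) combined with the Markov bound gives
\[
|P(x) - P(x_0)| \leq \frac{2n^2}{b-a} \, M \cdot |x - x_0| \leq \frac{2n^2}{b-a} \, M \cdot r = \frac{M}{2},
\]
so that $|P(x)| \geq |P(x_0)| - M/2 = M/2$, as required.

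It remains to check that $I$ has length at least $(b-a)/4n^2$, i.e.\ at least $r$. If $x_0$ lies at distance at least $r$ from both endpoints of $[a,b]$, then $I = [x_0 - r, x_0 + r]$ has length $2r \geq r$. Otherwise $x_0$ is within distance $r$ of an endpoint, say $x_0 - a < r$; then $I = [a, x_0 + r]$ has length $x_0 + r - a \geq r$ (and symmetrically near $b$). In every case $|I| \geq r = (b-a)/4n^2$.

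There is no real obstacle here: the only thing to be careful about is the endpoint case, where the symmetric interval of radius $r$ around $x_0$ may be truncated by $[a,b]$, which is precisely why the bound in the statement is $(b-a)/4n^2$ rather than the larger $(b-a)/2n^2$ one would obtain if $x_0$ were in the interior.
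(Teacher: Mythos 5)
Your proof is correct and follows essentially the same route as the paper: use Markov's inequality to bound $|P'|$ and conclude that within distance $(b-a)/4n^2$ of a point where $|P|=M$, the polynomial can drop by at most $M/2$. The only cosmetic differences are that the paper takes a one-sided interval with $x_0$ as an endpoint and argues by contradiction via $\int_{x_0}^y|P'|$, while you use a two-sided interval truncated to $[a,b]$ and a direct mean value estimate, which forces your (harmless) endpoint case check.
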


\begin{proof}
Suppose $x_0 \in [a,b]$ satisfies $P(x_0) = M$.
Set $I \subset [a,b]$ to be an interval of length $(b-a)/4n^2$ with $x_0$ as an endpoint. Without loss of generality, write $I=[x_0,x_1]$.
Suppose $|P(y)|<M/2$ for some $y \in I$.
Then the Markov inequality gives
$$
\frac{M}{2} 
< |P(y) - P(x_0)|
\leq \int_{x_0}^y |P'|
\leq \frac{b-a}{4n^2}\max_{[a,b]}|P'|
\leq \frac12 \max_{[a,b]}|P| = \frac{M}{2}
$$
which is impossible, so no such $y$ exists.
\end{proof}

\begin{corollary}\label{intmax}
Let $P$ be a polynomial of degree $n$ and $a<b$. Let $M=\max_{[a,b]}|P|$. Then
\[ \frac{M(b-a)}{8n^2}\leq \int_{a}^{b} |P|\leq M(b-a).\]
\end{corollary}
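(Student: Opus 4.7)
The upper bound is immediate: since $|P(x)| \leq M$ for all $x \in [a,b]$, integrating over $[a,b]$ gives $\int_a^b |P| \leq M(b-a)$. No further work is needed for this half.

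For the lower bound, the plan is to apply Lemma \ref{Pbig} directly. That lemma produces a closed subinterval $I \subset [a,b]$ of length at least $(b-a)/(4n^2)$ on which $|P| \geq M/2$ pointwise. Restricting the integral to $I$ and using this pointwise lower bound yields
\[
\int_a^b |P| \;\geq\; \int_I |P| \;\geq\; \frac{M}{2} \cdot \frac{b-a}{4n^2} \;=\; \frac{M(b-a)}{8n^2},
\]
which is the desired inequality. There is no real obstacle here — the entire content has been pushed into Lemma \ref{Pbig}, whose proof in turn relies on the Markov inequality (Lemma \ref{Markov}) to prevent $|P|$ from decreasing too quickly away from a point where it attains its maximum. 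The corollary is essentially a repackaging of Lemma \ref{Pbig} in integral form.
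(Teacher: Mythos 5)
Your proof is correct and is exactly the argument the paper intends: the upper bound is trivial from $|P|\leq M$, and the lower bound restricts the integral to the subinterval furnished by Lemma~\ref{Pbig}, which is why the statement is presented there as a corollary of that lemma with no further proof.
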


\section{Consistency with the $C^1$ case}\label{C1}\label{consistency}

In this section, we will see that, in the case $m=1$, Theorem~\ref{iff}
is consistent with the $C^1$ Whitney extension theorem for horizontal curves in the Heisenberg group
proven in \cite{Zim18}. We state \cite[Theorem 1.5]{Zim18} in our language here for convenience:
\begin{theorem}[Zimmerman]
\label{iffC1}
Let $K \subset \mathbb{R}$ be a compact set and $F$, $G$, and $H$ be jets of order 1 on $K$.
Then the triple $(F,G,H)$ extends to a $C^1$ horizontal curve $(f,g,h):\mathbb{R} \to \mathbb{H}^1$
if and only if
\begin{enumerate}
\item $F$, $G$, and $H$ are Whitney fields of class $C^1$ on $K$,
\item for every $t \in K$, we have $H^1(t) = 2(F^1(t) G(t) - F(t) G^1(t))$,
\item the following convergence is uniform as $(b-a) \searrow 0$ for $a,b \in K$:
\begin{equation}
\label{C1assume}
\frac{H(b) - H(a) - 2(F(b)G(a) - F(a)G(b))}{(b-a)^2} \to 0.
\end{equation}
\end{enumerate}
\end{theorem}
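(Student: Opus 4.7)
The plan is to show that conditions (1), (2), and (3) of Theorem~\ref{iff} reduce precisely to the corresponding conditions of Theorem~\ref{iffC1} when $m=1$; once this is verified, the two theorems say the same thing in this case, establishing consistency. Condition (1) is identical in both statements, so nothing is needed there.

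For condition (2), the polynomial $\mathcal{P}^1$ in \eqref{HigherHoriz} is determined by differentiating the horizontality identity $\gamma_3' = 2(\gamma_1'\gamma_2 - \gamma_2'\gamma_1)$ once, yielding $\mathcal{P}^1(F^0,G^0,F^1,G^1) = 2(F^1 G^0 - G^1 F^0)$. This matches Theorem~\ref{iffC1}(2) exactly.

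The substantive calculation is verifying condition (3). With $m=1$, the Taylor polynomials $T_a^1 F$ and $T_a^1 G$ are affine, so their derivatives are the constants $F^1(a)$ and $G^1(a)$ and
\[\int_a^b \bigl((T_a^1 F)'(T_a^1 G) - (T_a^1 G)'(T_a^1 F)\bigr) = (b-a)\bigl(F^1(a)G(a) - G^1(a) F(a)\bigr).\]
Substituting into \eqref{Aab} and expanding $F(b) - T_a^1 F(b) = F(b) - F(a) - F^1(a)(b-a)$ (and likewise for $G$), the terms carrying $F^1(a)$ and $G^1(a)$ cancel, leaving $A(a,b) = H(b) - H(a) - 2(F(b)G(a) - F(a)G(b))$, which is exactly the numerator in \eqref{C1assume}. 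A direct substitution into \eqref{Vab} gives $V(a,b) = (b-a)^2(1 + |F^1(a)| + |G^1(a)|)$.

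The only nonobvious point, and the one I would flag as the main subtlety, is that $V(a,b)$ carries the extra factor $1 + |F^1(a)| + |G^1(a)|$ that is absent from \eqref{C1assume}. However, $F^1$ and $G^1$ are continuous on the compact set $K$ and hence uniformly bounded, so $V(a,b)$ is comparable to $(b-a)^2$ with constants depending only on $K$. This two-sided comparability is enough to make the uniform convergence $A(a,b)/V(a,b) \to 0$ equivalent to the uniform convergence in \eqref{C1assume}, completing the consistency check.
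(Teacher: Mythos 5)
Your proposal is correct and follows essentially the same route as the paper's Section on consistency with the $C^1$ case: it observes that conditions (1) and (2) coincide, computes $A(a,b) = H(b)-H(a)-2(F(b)G(a)-F(a)G(b))$ and $V(a,b) = (b-a)^2(1+|F^1(a)|+|G^1(a)|)$ when $m=1$, and uses the uniform boundedness of $F^1$ and $G^1$ on the compact set $K$ to conclude that the two convergence conditions are equivalent. The explicit identification $\mathcal{P}^1(F^0,G^0,F^1,G^1)=2(F^1G^0-G^1F^0)$ is a small addition the paper leaves implicit, but the argument is otherwise the same.
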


As before, we have identified the functions $F(x) = F^0(x)$, $G(x) = G^0(x)$, and $H(x) = H^0(x)$ for each $x \in K$.
Note that conditions (1) and (2) here are the same as those in Theorem~\ref{iff}.
To prove that Theorem~\ref{iff} is indeed a generalization of Theorem~\ref{iffC1}, 
we need only show that the convergence in \eqref{C1assume}
is equivalent to the uniform convergence $A(a,b)/V(a,b) \to 0$.
This will follow from the definitions of $A$ and $V$.
Indeed, in the case $m=1$, the area discrepancy \eqref{Aab} is
\begin{align*}
A(a,b) &= H(b) - H(a) \\
&\hspace{.2in} -2 \int_a^b[F^1(a)(G(a) + G^1(a)(t-a)) - G^1(a)(F(a) + F^1(a)(t-a))] \, dt \\
&\hspace{.2in} +2F(a)(G(b) - (G(a) + G^1(a)(b-a))) \\
&\hspace{.2in} - 2G(a)(F(b) - (F(a) + F^1(a)(b-a))) \\
&= H(b) - H(a) - 2(F(b)G(a) - F(a)G(b)).
\end{align*}
That is, $A(a,b)$ is nothing more than the top of the fraction in \eqref{C1assume}.
Moreover, the velocity is
$$
V(a,b) 
= (b-a)^{2} + (b-a) \int_{a}^{b}|F^1(a)|+|G^1(a)|
= (b-a)^2 (1 + |F^1(a)|+|G^1(a)|).
$$
Since $F'$ and $G'$ are continuous on the compact set $K$,
there is a uniform bound of $1 + |F^1(a)|+|G^1(a)|$.
Therefore, the uniform convergence $A(a,b)/V(a,b) \to 0$ is equivalent to the uniform convergence \eqref{C1assume},
and Theorem~\ref{iff} is consistent with \cite[Theorem 1.5]{Zim18} when $m=1$.

\section{Importance of the area condition}\label{example}

Here we will see the importance of the area-velocity assumption (3) in Theorem~\ref{iff}.
In other words, we will construct Whitney fields $F$, $G$, and $H$
on a compact set $K \subset \mathbb{R}$
which satisfy condition (2) of Theorem~\ref{iff},
but there will be no $C^m$ horizontal extension of $(F,G,H)$ on $\mathbb{R}$.
Note that, since $F$, $G$, and $H$ are all Whitney fields, the classical Whitney extension theorem
guarantees that a $C^m$ extension will exist.
However, we will show that any such extension cannot possibly be horizontal.
Indeed, such a horizontal, smooth extension would have to satisfy condition (3) everywhere
(according to Proposition~\ref{nec}), 
but our mapping will not satisfy this on $K$.

This construction will be nearly identical to \cite[Proposition 1.3]{Zim18}
which provided an example in the case $m=1$.
Here, we will show that almost the same construction works for any $m \geq 1$.
We summarize this discussion in the following proposition. 

\begin{proposition}\label{propexample}
There is a compact set $K \subset \mathbb{R}$ and jets $F$, $G$, and $H$ of order $m$ on $K$ 
so that
\begin{enumerate}
\item $F$, $G$, and $H$ are Whitney fields of class $C^m$ on $K$,
\item for every $1 \leq k \leq m$ and $t \in K$ we have
$$
H^{k}(t) = \mathcal{P}^k \left(F^0(t),G^0(t),F^1(t),G^1(t),\dots,F^{k}(t),G^{k}(t)\right),
$$
\end{enumerate}
but there is no $C^m$ horizontal curve $(f,g,h): \mathbb{R} \to \mathbb{H}^1$ extending the triple $(F,G,H)$.
\end{proposition}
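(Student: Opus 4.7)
The plan is to follow the blueprint of \cite[Proposition 1.3]{Zim18} by placing all horizontal data at zero and encoding the obstruction to horizontality entirely in the vertical jet $H$. Specifically, let $K = \{0\} \cup \{2^{-n} : n \geq 1\}$ and define jets on $K$ by $F^k \equiv G^k \equiv 0$ for every $0 \leq k \leq m$, and $H^k \equiv 0$ for $1 \leq k \leq m$, together with $H^0(0) = 0$ and $H^0(2^{-n}) = 2^{-2mn}$ for $n \geq 1$.

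Condition (2) is immediate: each polynomial $\mathcal{P}^k$ from \eqref{HigherHoriz} arises from the Leibniz rule applied to $h' = 2(f'g - g'f)$, so every one of its monomials contains at least one derivative of $f$ and at least one derivative of $g$, and hence $\mathcal{P}^k$ vanishes identically at the all-zero input. For condition (1), the only nontrivial verification is for $H$. Since $H^k(a) = 0$ for $k \geq 1$, we have $T_a^m H(x) \equiv H(a)$, so the Whitney conditions for $k \geq 1$ are trivial, and for $k = 0$ we need only check $|H(b) - H(a)| = o(|b-a|^m)$ uniformly as $|b-a| \searrow 0$. A short case analysis handles this: when $a = 0$ and $b = 2^{-n}$, the ratio equals $2^{-mn}$; when $\{a,b\} = \{2^{-n}, 2^{-\ell}\}$ with $\ell > n$, the numerator is at most $2^{-2mn}$ while $|b-a| \geq 2^{-(n+1)}$, giving a ratio bounded by $2^{m-mn}$. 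Since $|b-a| \searrow 0$ forces $n \to \infty$, convergence to zero is uniform.

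The heart of the proof is to rule out a $C^m$ horizontal extension $(f,g,h) \colon \bbR \to \bbH^1$. Any such extension would satisfy $f^{(k)}(0) = g^{(k)}(0) = 0$ for every $0 \leq k \leq m$. Since $f^{(m)}$ and $g^{(m)}$ are continuous and vanish at the origin, for every $\epsilon > 0$ there exists $\delta > 0$ with $|f^{(m)}|, |g^{(m)}| < \epsilon$ on $[0,\delta]$. Taylor's theorem with integral remainder (applied to $f$ and $f'$, and analogously to $g$ and $g'$) yields
\[
|f(x)|, |g(x)| \leq \frac{\epsilon\, x^m}{m!}, \qquad |f'(x)|, |g'(x)| \leq \frac{\epsilon\, x^{m-1}}{(m-1)!} \quad \text{on } [0,\delta].
\]
Using the horizontality identity $h'(t) = 2(f'(t)g(t) - g'(t)f(t))$ and integrating from $0$ to $b = 2^{-n} < \delta$ gives
\[
|h(b) - h(0)| \leq 4\int_0^b \frac{\epsilon^2\, t^{2m-1}}{(m-1)!\, m!} \dd t = \frac{2\epsilon^2}{(m!)^2}\, b^{2m}.
\]
But $h(b) - h(0) = H(2^{-n}) - H(0) = 2^{-2mn} = b^{2m}$, so the ratio $|h(b) - h(0)|/b^{2m}$ equals $1$. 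Choosing $\epsilon$ so that $2\epsilon^2 < (m!)^2$ produces a contradiction for all $n$ large enough, ruling out any such extension.

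The only mild subtlety is ensuring uniformity of the Whitney estimate on $H$ across non-consecutive points of $K$, which is handled by the case analysis above; the non-existence argument is then a direct Taylor estimate and does not invoke the necessity portion of Theorem~\ref{iff}, which is proved only in the following section.
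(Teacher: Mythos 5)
Your proposal is correct, and it takes a genuinely different route from the paper's. The paper's construction (following \cite[Proposition 1.3]{Zim18}) takes $K$ to be a union of intervals $[1-2^{-n},\,1-\tfrac34 2^{-n}]$ accumulating at $1$, assigns the constant height $3^{-mn}$ on the $n$th interval, and rules out a horizontal extension by invoking the necessity statement (Proposition~\ref{nec}): for the gaps between consecutive intervals one computes $A/V \to \infty$, contradicting condition (3). You instead take $K=\{0\}\cup\{2^{-n}\}$ with heights $2^{-2mn}$ and give a self-contained argument: since every jet entry of $F$ and $G$ vanishes at $0$, any $C^m$ extension has $f^{(k)}(0)=g^{(k)}(0)=0$ for $0\le k\le m$, so Taylor's theorem with integral remainder and continuity of the $m$th derivatives gives $|f(x)|,|g(x)|\le \epsilon x^m/m!$ and $|f'(x)|,|g'(x)|\le \epsilon x^{m-1}/(m-1)!$ near $0$; integrating the pointwise horizontality identity $h'=2(f'g-g'f)$ (valid for $C^1$ horizontal curves, as noted after Lemma~\ref{lift}) bounds $|h(2^{-n})-h(0)|$ by $\tfrac{2\epsilon^2}{(m!)^2}2^{-2mn}$, contradicting $h(2^{-n})-h(0)=2^{-2mn}$ once $2\epsilon^2<(m!)^2$. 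Your verifications of conditions (1) and (2) are sound: every monomial of $\mathcal{P}^k$ from \eqref{HigherHoriz} involves a jet value of $F$ or $G$, all zero, and your two-case estimate for $H$ (ratio $2^{-mn}$, respectively at most $2^{m-mn}$, with $n\to\infty$ forced as $|b-a|\searrow 0$) gives the required uniformity, the higher-order remainders being identically zero. What your route buys is independence from Section~\ref{sectionnecessary}: the paper's proof forward-references Proposition~\ref{nec}, whereas your non-existence argument is elementary and localized at the accumulation point. What the paper's version illustrates, by contrast, is the area condition failing dramatically ($A/V\to\infty$) between distinct interval components of $K$; in your example $A/V$ merely stays equal to $1$ along the pairs $(0,2^{-n})$, which of course also suffices.
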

\begin{proof}
As mentioned above, we will proceed as in the proof of \cite[Proposition 1.3]{Zim18}.
Define the compact set $K \subset \mathbb{R}$ as follows:
$$
K := \bigcup_{n=0}^{\infty} [c_n,d_n] \cup \{1\} 
\quad
\text{where }
[c_n,d_n] := \left[ 1- 2^{-n},1-\tfrac34 2^{-n} \right]
\text{ for every } n \in \mathbb{N}.
$$
For $0 \leq k \leq m$, define $F^k(t)=0$ and $G^k(t)=0$ for every $t \in K$.
Also, define $H(t) = 3^{-mn}$ if $t \in [c_n,d_n]$ and $H(1) = 0$,
and set $H^k(t) = 0$ for $1 \leq k \leq m$ and every $t \in K$.
The jets of $F$ and $G$ are trivially Whitney fields,
and (2) is trivially satisfied.

We will now show that $H$ is a Whitney field.
For $1 \leq k \leq m$, the remainders $(R_a^m H)^k$ are constantly 0 on $K$.
Thus we need only show that 
$$
\frac{(R_a^m H)^0(b)}{|b-a|^m} = \frac{H(b) - H(a)}{|b-a|^m} \to 0
$$ 
uniformly on $K$ as $(b-a) \to 0$.
Fix $\varepsilon > 0$ and $n \in \mathbb{N}$ with $4^m(2/3)^{mn} < \varepsilon$.
Choose $a,b \in K$ with $|b-a| < 2^{-(n+2)}$.
If $a$ and $b$ lie in the same interval $[c_k,d_k]$, then $(R_a^m H)^0(b) = 0$.
If $a$ and $b$ lie in different intervals $[c_k,d_k]$ and $[c_\ell,d_\ell]$
(say $\ell > k$)
then, as in the proof of \cite[Proposition 1.3]{Zim18}, we see that $k \geq n$.
Therefore
$$
\frac{H(b) - H(a)}{|b-a|^m} 
\leq 
\frac{3^{-mk} - 3^{-m\ell}}{(c_\ell-d_k)^m}
\leq 
\frac{3^{-mk}}{(c_{k+1}-d_k)^m}
=
4^m\left( \tfrac{2}{3} \right)^{mk}
\leq 
4^m\left( \tfrac{2}{3} \right)^{mn}
<\varepsilon.
$$
If either $a$ or $b$ is equal to 1, a similar argument holds.
Hence $H$ is a Whitney field.

Suppose now that $(f,g,h): \mathbb{R} \to \mathbb{H}^1$ is a $C^m$ curve extending $(F,G,H)$.
(According to the classical Whitney extension theorem, such a curve is guaranteed to exist.)
Suppose also that $(f,g,h)$ is horizontal.
Then, according to Proposition~\ref{nec},
we must have $A(a,b)/V(a,b) \to 0$ uniformly as $(b-a) \to 0$ for $a,b \in K$.
However,
$$
A(a,b) = H(b)-H(a)
\quad
\text{and}
\quad
V(a,b) = (b-a)^{2m}
$$
for any $a,b \in K$.
Therefore, we have $|c_{n+1}-d_n| = 2^{-(n+2)} \to 0$, but
$$
\frac{A(c_{n+1},d_n)}{V(c_{n+1},d_n)} 
= \frac{3^{-mn} - 3^{-m(n+1)}}{4^{-m(n+2)}}
= \frac{(3^m-1)16^m}{3^m} \left( \frac{4}{3} \right)^{mn}
\to \infty
$$
as $n \to \infty$.
This contradicts Proposition~\ref{nec}.
Thus there is no $C^m$ horizontal curve extending $(F,G,H)$.
\end{proof}

\section{Necessity of the criteria for a $C^m$ horizontal extension}\label{sectionnecessary}

In this section we show that the conditions in Theorem \ref{iff} 
are necessary for existence of a $C^m$ horizontal extension. Recall the polynomials $\mathcal{P}^k$ from \eqref{HigherHoriz}.

\begin{proposition}\label{nec}
Suppose $(f,g,h)\colon \mathbb{R}\to \mathbb{H}^{1}$ is a $C^m$ horizontal curve and $K\subset \mathbb{R}$ is a compact set. 
Let $F=J^m(f)|_K$, $G=J^m(g)|_K$, and $H = J^m(h)|_K$ be the jets of order $m$ obtained by restricting $f,g,h$ and their derivatives to $K$. Then 
\begin{enumerate}
\item $F$ $G$, and $H$ are Whitney fields of class $C^m$ on $K$, 
\item for all $t\in K$ and $1 \leq k \leq m$ we have
\begin{equation}
\label{Horiznec}
H^{k}(t) = \mathcal{P}^k \left(F^0(t),G^0(t),F^1(t),G^1(t),\dots,F^{k}(t),G^{k}(t)\right),
\end{equation}
\item $A(a,b)/V(a,b)\to 0$ uniformly as $(b-a) \searrow 0$ with $a,b\in K$.
\end{enumerate}
\end{proposition}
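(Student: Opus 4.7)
The plan is to establish (1) and (2) as quick consequences of the hypotheses on $(f,g,h)$, and then focus on (3), which requires a reduction followed by careful Taylor-remainder bookkeeping.

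For (1), since $f,g,h$ are $C^m$ on $\mathbb{R}$, Taylor's theorem with Peano remainder tells us that the jets $J^m(f)$, $J^m(g)$, $J^m(h)$ are Whitney fields of class $C^m$ on the compact set $K$. For (2), differentiate the horizontality identity $h'(t)=2(f'(t)g(t)-g'(t)f(t))$ a total of $k-1$ times via the Leibniz rule; the resulting polynomial in $f^{(j)},g^{(j)}$ for $0 \le j \le k$ is exactly $\mathcal{P}^k$ from \eqref{HigherHoriz}, and restricting to $t \in K$ yields \eqref{Horiznec}.

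The substance is (3). The first step is a reduction: I will translate $(f,g,h)$ by the Heisenberg left action by $(-f(a),-g(a),-h(a))$, producing a new horizontal curve $(\tilde f,\tilde g,\tilde h)$ with $\tilde f(a)=\tilde g(a)=\tilde h(a)=0$. I will then verify by a direct algebraic calculation that both $A(a,b)$ and $V(a,b)$ are invariant under this replacement. The invariance of $V(a,b)$ is immediate because translation only changes the $0$th Taylor coefficient, so $(T_a^m\tilde f)' = (T_a^m f)'$ and likewise for $g$. The invariance of $A(a,b)$ is more delicate: the two ``boundary'' correction terms on the second line of \eqref{Aab} are precisely what is needed to cancel the cross terms produced by translating $h$ (which picks up $-2g(a)f(t)+2f(a)g(t)$) and by translating the Taylor polynomials (using $T_a^m\tilde f = T_a^m f - f(a)$); I will carry out this cancellation explicitly.

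In the reduced setting $f(a)=g(a)=h(a)=0$, horizontality gives $h(b)=2\int_a^b(f'g-g'f)$, and the second line of $A(a,b)$ vanishes, so
\[A(a,b)=2\int_a^b\bigl[(f'g-g'f)-((T_a^mf)'T_a^mg-(T_a^mg)'T_a^mf)\bigr].\]
Write $P=T_a^m f$, $Q=T_a^m g$, $r_f=f-P$, $r_g=g-Q$ and similarly $r_{f'},r_{g'}$. Expand the integrand; it becomes a sum of six terms of the form $P'r_g$, $r_{f'}Q$, $r_{f'}r_g$, $Q'r_f$, $r_{g'}P$, $r_{g'}r_f$. By Taylor's theorem applied to $f,g\in C^m$, there is a modulus of continuity $\alpha$ with $|r_f|,|r_g|\le\alpha(b-a)(b-a)^m$ and $|r_{f'}|,|r_{g'}|\le\alpha(b-a)(b-a)^{m-1}$ on $[a,b]$. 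The key observation is that $P(a)=Q(a)=0$, so $|P(x)|\le\int_a^b|P'|$ and $|Q(x)|\le\int_a^b|Q'|$ on $[a,b]$; hence every cross term is dominated by $\alpha(b-a)\bigl[(b-a)^{2m}+(b-a)^m\int_a^b(|P'|+|Q'|)\bigr]=\alpha(b-a)V(a,b)$. Summing gives $|A(a,b)|\le C\alpha(b-a)V(a,b)$, so $A(a,b)/V(a,b)\to 0$ uniformly as $(b-a)\searrow 0$ with $a,b\in K$.

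The main obstacle is the reduction step: one must track carefully how the Heisenberg translation interacts simultaneously with $h$, with the Taylor polynomials $T_a^m f$ and $T_a^m g$, and with the correction terms in \eqref{Aab}, and verify that everything cancels to leave $A(a,b)$ invariant. Once that bookkeeping is done, the estimate of $A(a,b)$ in terms of $V(a,b)$ is a routine Taylor remainder calculation relying on the crucial fact that $P(a)=Q(a)=0$ after translation.
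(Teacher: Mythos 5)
Your proposal is correct and follows essentially the same route as the paper: (1) via Taylor's theorem, (2) via Leibniz differentiation of the horizontality identity, and (3) via a Heisenberg left translation reducing to $f(a)=g(a)=h(a)=0$ (the paper's ``general case'' computation is precisely your invariance check for $A(a,b)$ and $V(a,b)$), followed by expanding $f'g-(T_a^mf)'T_a^mg$ into Taylor-remainder cross terms bounded by $\alpha(b-a)V(a,b)$. The only notable deviation is that for the terms involving $|T_a^mg|$ you use the pointwise bound $|T_a^mg(x)|\le\int_a^b|(T_a^mg)'|$ (valid since $T_a^mg(a)=0$), which replaces the paper's appeal to Markov's inequality through Corollary~\ref{intmax} and is a mild, correct simplification of that step.
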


We use the remainder of this section to prove Proposition \ref{nec}.

\begin{proof}
Suppose $f, g, h, F, G, H, K$ are as in the statement of Proposition \ref{nec}. 
Without loss of generality, we may assume that $K=[A,B]$ is a closed interval.
Indeed, if (1), (2), and (3) hold on the interval $[A,B]$, then they also hold on any compact subset.
Taylor's theorem asserts that $F$, $G$, and $H$ must be Whitney fields of class $C^m$ on $K$. Also \eqref{HigherHoriz} gives
\[D^kh(t) = \mathcal{P}^k \left(f(t),g(t),f'(t),g'(t),\dots,D^kf(t),D^kg(t)\right)\]
%\[H'(t)=2(F'(t)G(t)-G'(t)F(t)) \mbox{ for every }t\in K\]
for $1 \leq k \leq m$ and for all $t\in \mathbb{R}$. This proves Proposition \ref{nec} (1) and (2). It remains to prove (3).

Fix $\varepsilon>0$. There exists $\delta>0$ such that if $[a,b] \subset K$ and $(b-a)<\delta$ then:
\begin{enumerate}[(i)]
\item $|D^if-D^if(a)|\leq \varepsilon$ and $|D^ig-D^ig(a)|\leq \varepsilon$ on $[a,b]$ for $0\leq i\leq m$.
\item $|f-T_a^mf|\leq \varepsilon (b-a)^m$ and $|g-T_a^mg|\leq \varepsilon (b-a)^m$ on $[a,b]$.
\item $|f'-(T_a^mf)'|\leq \varepsilon (b-a)^{m-1}$ and $|g'-(T_a^mg)'|\leq \varepsilon (b-a)^{m-1}$ on $[a,b]$.
\end{enumerate}
Let $a\in K$ and let $Tf=T_a^mf$ and $Tg=T_a^mg$ be the Taylor polynomials of $f$ and $g$ of order $m$ at $a$.
%If $a, b$ are chosen from the compact set $K$, then $\delta$ can be chosen depending on $\varepsilon$ and $K$ but independent of $a$.
Fix $b\in K$ with $0<b-a<\delta$.

\vspace{.2cm}

\textbf{Temporarily assume $f(a)=g(a)=h(a)=0$.} In this case $A(a,b)$ takes the simpler form.
\[ A(a,b) = h(b)-h(a)-2 \int_{a}^{b} ((Tf)'Tg-Tf(Tg)'). \]
Since $(f,g,h)$ is a horizontal curve, we have
\[h(b)-h(a)=2\int_{a}^{b} (f'g-fg').\]
Hence we can estimate $|A(a,b)|$ as follows
\begin{align}\label{kareaest}
&\left| h(b)-h(a)-2\int_{a}^{b} ((Tf)'Tg-Tf(Tg)') \right|\\
&\qquad \leq 2\int_{a}^{b}|f'g-(Tf)'Tg| + 2\int_{a}^{b}|fg'-Tf(Tg)'|.\nonumber
\end{align}
We will only show how to bound the first term above, since the second is the same with $f$ and $g$ interchanged. Notice
\[f'g-(Tf)'(Tg) = (f'-(Tf)')(g-Tg) + (f'-(Tf)')(Tg) + (g-Tg)(Tf)'.\]
Hence
\begin{align*}
\int_{a}^{b}|f'g-(Tf)'Tg|&\leq \int_{a}^{b}|f'-(Tf)'||g-Tg| + |f'-(Tf)'||Tg| + |g-Tg||(Tf')|\\
&\leq \varepsilon^{2}(b-a)^{2m} + \varepsilon(b-a)^{m-1}\int_{a}^{b}|Tg| + \varepsilon(b-a)^{m}\int_{a}^{b}|(Tf)'|.
\end{align*}
Using a similar estimate for the second term gives the following estimate of \eqref{kareaest}
\begin{align*}
& \left| h(b)-h(a)-2 \int_{a}^{b} ((Tf)'Tg-Tf(Tg)') \right|\\
&\leq 4\varepsilon^{2}(b-a)^{2m} +2\varepsilon (b-a)^{m-1}\int_{a}^{b}|Tf|+|Tg| + 2\varepsilon (b-a)^{m}\int_{a}^{b}|(Tf)'|+|(Tg)'|.
\end{align*}
Since $Tf(a)=f(a)=0$, we have $|Tf(x)|\leq M(b-a)$ on $[a,b]$ where $M=\max_{[a,b]}|(Tf)'|$. Combining this with Corollary \ref{intmax} applied to the polynomial $(Tf)'$ gives
\[\int_{a}^{b}|Tf|\leq M(b-a)^{2}\leq 8m^{2}(b-a)\int_{a}^{b}|(Tf)'|.\]
Similarly, we obtain the same inequality for $g$
\[\int_{a}^{b}|Tg|\leq M(b-a)^{2}\leq 8m^{2}(b-a)\int_{a}^{b}|(Tg)'|.\]
Hence
\begin{align*}
& \left| h(b)-h(a)-2 \int_{a}^{b} ((Tf)'Tg-Tf(Tg)') \right|\\
&\qquad \leq 4\varepsilon^{2}(b-a)^{2m} + (2+16m^2)\varepsilon (b-a)^{m}\int_{a}^{b}|(Tf)'|+|(Tg)'|.
\end{align*}

\vspace{.2cm}

\textbf{General case without assuming $f(a)=g(a)=h(a)=0$}. We begin by considering the curve $(u,v,w):=(f(a),g(a),h(a))^{-1}(f,g,h)$. The definition of the group operation gives
\begin{equation}\label{horiztranslate}
u=f-f(a), \quad v=g-g(a),
\end{equation}
\begin{equation}\label{verttranslate}
w=h-h(a)+2f(a)g-2g(a)f.
\end{equation}
Also $D^iu=D^if$ and $D^iv=D^ig$ for $1\leq i\leq m$. Clearly $u(a)=v(a)=w(a)=0$ and $(u,v,w)$ is a $C^m$ horizontal curve. 
As a consequence of this, we obtain the following analogues of the earlier estimates on $f$ and $g$ for $0\leq i \leq m$ on the interval $[a,b]$:
\begin{enumerate}[(i)]
\item $|D^iu-D^iu(a)|=|D^if-D^if(a)|\leq \varepsilon$, $|D^iv-D^iv(a)|=|D^ig-D^ig(a)|\leq \varepsilon$.
\item $|u-Tu|=|f-Tf|\leq \varepsilon (b-a)^m$ and $|v-Tv|=|g-Tg|\leq \varepsilon (b-a)^m$.
\item $|u'-(Tu)'|=|f'-(Tf)'|\leq \varepsilon (b-a)^{m-1}$ and $|v'-(Tv)'|=|g'-(Tg)'|\leq \varepsilon (b-a)^{m-1}$.
\end{enumerate}

Hence we may follow the proof of the previous case to obtain the estimate

\begin{align*}
& \left| w(b)-w(a)-2 \int_{a}^{b} ((Tu)'Tv-Tu(Tv)') \right|\\
&\qquad \leq 4\varepsilon^{2}(b-a)^{2m} + (2+16m^2)\varepsilon (b-a)^{m}\int_{a}^{b}|(Tu)'|+|(Tv)'|.
\end{align*}

Easy calculations yield
\[w(b)-w(a)=h(b)-h(a)+2f(a)g(b)-2g(a)f(b)\]
and
\[\int_{a}^{b} ((Tu)'Tv-Tu(Tv)')  = \int_{a}^{b} ((Tf)'(Tg)-(Tf)(Tg)') -g(a)Tf(b)+f(a)Tg(b).\]
Hence
\begin{align*}
&w(b)-w(a)-2 \int_{a}^{b} ((Tu)'Tv-Tu(Tv)')\\
&\qquad = h(b)-h(a)-2\int_{a}^{b} (Tf)'(Tg)-(Tf)(Tg)' \\
&\qquad \qquad +2f(a)(g(b)-Tg(b))- 2g(a)(f(b)-Tf(b)).    \\
\end{align*}

We deduce that the absolute value of
\begin{align*}
& h(b)-h(a)-2\int_{a}^{b} (Tf)'(Tg)-(Tf)(Tg)' \\
& \qquad +2f(a)(g(b)-Tg(b))- 2g(a)(f(b)-Tf(b))
\end{align*}
is less than or equal to 
\[4\varepsilon^{2}(b-a)^{2m} + (2+16m^2)\varepsilon (b-a)^{m}\int_{a}^{b}|(Tf)'|+|(Tg)'|.\]

That is, we have shown $|A(a,b)|\leq (4\varepsilon^{2} + (2+16m^2)\varepsilon)V(a,b)$ for any $a,b\in K$ with $0<b-a<\delta$. This shows 
\[A(a,b)/V(a,b)\to 0 \mbox{ uniformly as }(b-a)\searrow 0 \mbox{ with }a,b\in K\]
which concludes the proof of Proposition \ref{nec}.
\end{proof}

\section{Sufficiency of the criteria for a $C^m$ horizontal extension}\label{sectionsufficient}

In this section we show that the conditions in Theorem~\ref{iff} are sufficient to guarantee the existence of a $C^m$ horizontal extension. Recall the polynomials $\mathcal{P}^k$ from \eqref{HigherHoriz} obtained by differentiating the definition of a horizontal curve. Given jets $F, G, H$ of order $m$ on a compact set $K\subset \mathbb{R}$ and $a,b\in K$, recall the area discrepancy $A(a,b)$ and velocity $V(a,b)$ from \eqref{Aab} and \eqref{Vab} given by:
\begin{align*}
 A(a,b)&=H(b)-H(a)-2\int_{a}^{b}((T_{a}F)'(T_{a}G)-(T_{a}G)'(T_{a}F))\\
\nonumber &\qquad +2F(a)(G(b)-T_{a}G(b))-2G(a)(F(b)-T_{a}F(b)),
\end{align*}
and
\begin{equation*}
V(a,b)= (b-a)^{2m} + (b-a)^{m} \int_{a}^{b}|(T_{a}F)'|+|(T_{a}G)'|.
\end{equation*}

\begin{theorem}\label{newthm}
Let $K\subset \bbR$ be compact and $F,G,H$ be jets of order $m$ on $K$. Assume
\begin{enumerate}
\item $F$, $G$, and $H$ are Whitney fields of class $C^m$ on $K$,
\item for every $1 \leq k \leq m$ and $t\in K$ we have
$$
H^{k}(t) = \mathcal{P}^k \left(F^0(t),G^0(t),F^1(t),G^1(t),\dots,F^{k}(t),G^{k}(t)\right),
$$
\item $A(a,b)/V(a,b)\to 0$ uniformly as $(b-a) \searrow 0$ with $a,b\in K$.
\end{enumerate}
Then the triple $(F,G,H)$ extends to a $C^m$ horizontal curve $(f,g,h)\colon \bbR \to \bbH^{1}$.
\end{theorem}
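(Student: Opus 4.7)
The strategy, as outlined in the introduction, is to first extend $F$ and $G$ separately to $C^m$ functions on $\bbR$ using the classical Whitney extension theorem (Theorem~\ref{classicalWhitney}), and then on each maximal open interval complementary to $K$ inside $[\min K, \max K]$, to modify these extensions by small perturbations chosen so that the horizontal lift of the resulting planar curve produces the correct height $H$ at the endpoints of each gap. Let $A = \min K$, $B = \max K$, and write $[A,B] \setminus K = \bigsqcup_{i}(a_i,b_i)$. Applying Theorem~\ref{classicalWhitney} to the Whitney fields $F$ and $G$ yields $C^m$ functions $\tilde f, \tilde g:\bbR\to\bbR$ whose jets on $K$ agree with $F$ and $G$, and for which the estimates \eqref{modulus1}--\eqref{modulus2} hold with a common modulus of continuity~$\alpha$.

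On each gap $[a_i,b_i]$, define $\tilde h_i$ to be the horizontal lift of $(\tilde f,\tilde g)$ starting from $H(a_i)$ as in Lemma~\ref{lift}. The endpoint value $\tilde h_i(b_i)$ need not equal $H(b_i)$, and the discrepancy is what must be corrected. Using \eqref{modulus2} to compare $\tilde f',\tilde g'$ with the Taylor polynomial derivatives $(T_{a_i}^mF)'$, $(T_{a_i}^mG)'$ on $[a_i,b_i]$, together with the definition of $A(a,b)$, I expect to obtain a bound of the form
\[
\bigl|H(b_i) - \tilde h_i(b_i)\bigr| \le |A(a_i,b_i)| + C\,\alpha(b_i-a_i)\,V(a_i,b_i),
\]
which by assumption (3) is $o(V(a_i,b_i))$ uniformly in $i$ as $b_i-a_i \searrow 0$.

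The core technical step (the announced Lemma~\ref{buildcurve}) is to construct, on each gap, $C^m$ perturbations $\phi_i,\psi_i$ supported in $(a_i,b_i)$ with vanishing $m$-jets at both endpoints, such that (i) all derivatives up to order $m$ of $\phi_i,\psi_i$ are $o(1)$ uniformly as $b_i-a_i\to 0$, and (ii) the horizontal lift of $(\tilde f+\phi_i,\tilde g+\psi_i)$ from height $H(a_i)$ lands at height $H(b_i)$. This is the main obstacle. Item (ii) requires the perturbations to generate a prescribed extra signed area of order at most $o(1)\cdot V(a_i,b_i)$, and so the achievable area must itself be at least of order $V(a_i,b_i)$ times the size of the perturbation. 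I would split into two regimes depending on which term of $V(a_i,b_i)$ dominates. When $(b_i-a_i)^{2m}$ dominates, deploy both $\phi_i$ and $\psi_i$, chosen as small multiples of fixed bump profiles scaled to $[a_i,b_i]$ so that $\int(\phi_i'\psi_i - \psi_i'\phi_i)$ realizes any sign and magnitude up to order $\beta^2(b_i-a_i)^{2m}$, where $\beta$ is the derivative bound. When the integral term dominates, use a single perturbation (say $\psi_i$ of size $\beta(b_i-a_i)^m$ concentrated on a subinterval where $(T_{a_i}^mF)'$ does not change sign) to realize an area contribution $\int\psi_i(T_{a_i}^mF)'$ of order $\beta(b_i-a_i)^m\int|(T_{a_i}^mF)'|$. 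The latter requires Markov's inequality in the guise of Lemma~\ref{Pbig} and Corollary~\ref{intmax} to guarantee that, on a subinterval of proportional length, $|(T_{a_i}^mF)'|$ stays close to its maximum and retains a definite sign, so the constructed $\psi_i$ produces the prescribed area with the correct sign.

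Once Lemma~\ref{buildcurve} is established, define $f = \tilde f + \sum_i\phi_i$, $g = \tilde g + \sum_i\psi_i$, extended unchanged off $[A,B]$, and let $h$ be the resulting horizontal lift with $h(A) = H(A)$. By construction $(f,g,h)$ is horizontal on $\bbR$, its jets agree with $(F,G,H)$ at every gap endpoint, and $(f,g)$ agrees with $(\tilde f,\tilde g)$ on $K$. The uniform $o(1)$ control on every derivative of the perturbations guarantees that on approach to any point of $K$ (including accumulation points), the derivatives of $f$ and $g$ converge to the prescribed $F^k$ and $G^k$, giving $C^m$ regularity of $f,g$. The $C^m$ regularity of $h$ on $\bbR\setminus K$ is automatic, while at points of $K$ it follows from \eqref{HigherHoriz} combined with assumption (2), which ensures that the derivatives of $h$ match $H^k$ exactly on $K$. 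This will yield the $C^m$ horizontal extension claimed in Theorem~\ref{newthm}.
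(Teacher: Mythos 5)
Your overall architecture is the same as the paper's: Whitney-extend $F$ and $G$, perturb on each gap by bump functions with vanishing $m$-jets at the endpoints, size the perturbation according to which term of $V(a,b)$ dominates using Markov's inequality (Lemma~\ref{Pbig}, Corollary~\ref{intmax}), and glue. However, there is a genuine gap in your regime where $(b_i-a_i)^{2m}$ dominates. Adding $\phi_i,\psi_i$ changes the lifted height not by $2\int(\phi_i'\psi_i-\psi_i'\phi_i)$ alone but (after integrating by parts, using the vanishing endpoint jets) by $4\int(\psi_i \tilde f'+\phi_i \tilde g'+\psi_i\phi_i')$, and the interaction terms are not negligible in this regime. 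With $\|\psi_i\|_\infty\sim\beta(b_i-a_i)^m$ and only the bound $\int_{a_i}^{b_i}|\tilde f'|\lesssim (b_i-a_i)^m$ available, the term $\int\psi_i \tilde f'$ can be of size $\beta(b_i-a_i)^{2m}$, i.e.\ a factor $1/\beta$ larger than the $\beta^2(b_i-a_i)^{2m}$ you budget for, and its sign is uncontrolled; so prescribing the correction through the pure $\phi_i'\psi_i-\psi_i'\phi_i$ term cannot hit the target exactly. The paper's Lemma~\ref{fgsmall} exists precisely to handle this: if one interaction integral is already comparable to the required area (at least $|\mathcal{A}|/24$), a single rescaled bump suffices; otherwise both are small, the product term dominates (with $\beta\sim\sqrt{\hat\alpha}$ it is at least $|\mathcal{A}|/3$), and an intermediate value argument in the scaling parameter of one bump finishes. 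Your sketch needs this case analysis, or an equivalent continuity argument, to satisfy Proposition~\ref{perturb}(3) exactly.

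A second, smaller gap: you define $h$ as a single global lift started at height $H(\min K)$ and assert agreement of the jets with $H$ at gap endpoints, but the gap-by-gap area matching only controls increments across gaps, while $K$ itself may have positive measure and the lift accrues height $\int 2(f'g-g'f)$ there. This does work out, but it requires an argument: on $K$ one has $2(f'g-g'f)=2(F^1G-G^1F)=H^1$ by hypothesis \eqref{HorizAssume} with $k=1$, and applying the fundamental theorem of calculus to a classical $C^1$ Whitney extension of the field $H$ shows that the accumulated height equals $H$ at every point of $K$. The paper avoids this by setting $\mathcal{H}=H$ on $K$, lifting separately on each gap from $H(a_i)$, and then proving $C^m$ regularity of the glued curve directly; that regularity at accumulation points of $K$ (the inductive estimate of Proposition~\ref{conclusion}) is also asserted rather than proved in your sketch, although the uniform $o(1)$ bound $\beta$ on the perturbation derivatives is indeed the right ingredient for it.
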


We use the remainder of this section to prove Theorem \ref{newthm}. 

\begin{proof}
Suppose $K,F,G,H$ are as in the statement of the theorem and satisfy the assumptions stated. Let $I=[\min K, \max K]$ and notice that it suffices to find a $C^m$ horizontal extension $(f,g,h)\colon I\to \bbH$. Here, derivatives and continuity at the endpoints are defined, as usual, using one-sided limits. We may write $I\setminus K=\cup_{i=1}^{\infty}(a_i,b_i)$ for disjoint open intervals $(a_i,b_i)$ with $a_i, b_i \in K$.

Using the classical Whitney extension theorem (Theorem~\ref{classicalWhitney}), we can choose $f,g\colon I\to \bbR$ of class $C^m$ such that $D^kf(x)=F^{k}(x)$ and $D^kg(x)=G^{k}(x)$ for every $x\in K$ and $0\leq k\leq m$. We also choose $f$ and $g$ to be $C^{\infty}$ in $I\setminus K$. Note that, while the classical Whitney extension theorem gives extension to an open set containing $K$, to extend to $I$ we simply extend to an open set containing $I$ then restrict to $I$. 
Recall that $D^{k}(T_{a}^{m}F)(x)$ takes the form
\[ D^{k}(T_{a}^{m}F)(x)=\sum_{\ell=0}^{m-k}\frac{F^{k+\ell}(a)}{\ell!}(x-a)^{\ell}\]
and a similar expression holds for $D^{k}(T_{a}^{m}G)(x)$. 
Using \eqref{modulus1}, we may assume there exists a modulus of continuity $\alpha$ so that, for all $a, x\in K$ and $0\leq k\leq m$,
\begin{equation}\label{modulus1F}
|F^{k}(x)-D^{k}(T_{a}^{m}F)(x) | \leq \alpha(|x-a|)|x-a|^{m-k},
\end{equation}
\begin{equation}\label{modulus1G}
|G^{k}(x)-D^{k}(T_{a}^{m}G)(x) | \leq \alpha(|x-a|)|x-a|^{m-k}.
\end{equation}
Using \eqref{modulus2}, we can ensure that for $a\in K$, $x\in I$ and $0\leq k\leq m$:
\begin{equation}\label{modulus2F}
|D^kf(x)-D^{k}(T_{a}^{m}F)(x)| \leq C\alpha(|x-a|)|x-a|^{m-k},
\end{equation}
\begin{equation}\label{modulus2G}
|D^kg(x)-D^{k}(T_{a}^{m}G)(x)| \leq C\alpha(|x-a|)|x-a|^{m-k}
\end{equation}
for some constant $C>0$.
Hence, by scaling the value of $\alpha$ by a constant depending on $F, G, K, m$ (but still maintaining $\alpha(t)\to 0$ as $t\searrow 0$), we can assume that for every $0\leq k\leq m$:
\begin{equation}\label{modcty}
|D^kf(x)-D^kf(a)|\leq \alpha(|x-a|) \mbox{ and } |D^kg(x)-D^kg(a)|\leq \alpha(|x-a|).
\end{equation}
Finally, using the hypothesis $A(a,b)/V(a,b)\to 0$ uniformly, we choose $\alpha$ possibly larger (but still a modulus of continuity) so that
\begin{equation}\label{quotientcvg}
A(a,b)\leq \alpha(b-a)V(a,b) \mbox{ for }a,b\in K \mbox{ with }a<b.
\end{equation}

\begin{proposition}\label{perturb}
There exists a modulus of continuity $\beta \geq \alpha$ (independent of $i$) for which the following holds: 
for each interval $[a_i,b_i]$, there exist $C^{\infty}$ functions $\phi, \psi\colon [a_i,b_i]\to \bbR$ such that
\begin{enumerate}
\item $D^k\phi(a_i)=D^k\phi(b_i)=D^k\psi(a_i)=D^k\psi(b_i)=0$ for $0\leq k\leq m$.
\item $\max\{|D^k\phi|, |D^k\psi|\} \leq \beta(b_i-a_i)$ for $0\leq k\leq m$ on $[a_i,b_i]$.
\item $H(b_i)-H(a_i)=2\int_{a_i}^{b_i}(f+\phi)'(g+\psi)-(g+\psi)'(f+\phi)$.
\end{enumerate}
\end{proposition}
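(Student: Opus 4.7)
The plan is to reduce condition (3) via integration by parts to an explicit equation in $\phi,\psi$, bound the resulting correction $E$, and then construct $\phi,\psi$ from scaled bump functions whose amplitudes are selected by the intermediate value theorem. Set $L=a_i$, $R=b_i$, $\delta=R-L$, and let
\[E := H(R) - H(L) - 2\int_L^R (f'g - g'f).\]
Since $D^k\phi$ and $D^k\psi$ vanish at $L$ and $R$ for $0\leq k\leq m$, integration by parts in the cross terms of $2\int_L^R((f+\phi)'(g+\psi)-(g+\psi)'(f+\phi))$ reduces condition (3) to
\[E = 4\int_L^R(f'\psi - g'\phi) + 4\int_L^R\phi'\psi.\]
After left-translating $(f,g,h)$ in $\bbH^1$ by $(F(L),G(L),H(L))^{-1}$ (which leaves $\phi,\psi$ and their derivatives unchanged), we may assume $F(L)=G(L)=0$. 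Using the Whitney bounds \eqref{modulus2F}--\eqref{modulus2G} to compare $f,g,f',g'$ with their Taylor polynomials, Corollary \ref{intmax} applied to $(T_LG)'$ to bound $\max|g|$ by $C\int|(T_LG)'|+C\alpha(\delta)\delta^m$, and hypothesis (3) through \eqref{quotientcvg}, we obtain $|E|\leq \alpha_1(\delta)V(L,R)$ for some modulus of continuity $\alpha_1\geq\alpha$ depending only on $K,F,G,m$.

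Fix a $C^\infty$ template $\eta\colon[0,1]\to\bbR$ with $D^k\eta(0)=D^k\eta(1)=0$ for $0\leq k\leq m$. The construction splits by which term of $V(L,R)$ dominates. In \emph{Regime 1}, where $\int_L^R(|(T_LF)'|+|(T_LG)'|)>\delta^m$, assume without loss of generality that $\int|(T_LF)'|$ is the larger. Apply Lemma \ref{Pbig} to the polynomial $(T_LF)'$ of degree at most $m-1$ to obtain a closed subinterval $J\subset[L,R]$ of length at least $\delta/(4(m-1)^2)$ on which $(T_LF)'$ has consistent sign and $|(T_LF)'|\geq M/2$ for $M=\max_{[L,R]}|(T_LF)'|$. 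Take $\phi\equiv 0$ and $\psi(x)=\lambda\delta^m\,\eta_J((x-L)/\delta)$, where $\eta_J$ is an affine rescaling of $\eta$ essentially supported in $(J-L)/\delta$ with sign matching $\mathrm{sgn}(E)\cdot\mathrm{sgn}((T_LF)'|_J)$; the derivative bounds satisfy $|D^k\psi|\leq C|\lambda|$. Using $M\geq\int|(T_LF)'|/\delta$ from Corollary \ref{intmax}, we obtain $|4\int f'\psi|\geq C\lambda\delta^m\int|(T_LF)'|$ up to an absorbable $f'-(T_LF)'$ error; as $\lambda$ varies over $[0,C\alpha_1(\delta)]$, the intermediate value theorem selects $\lambda$ realizing $E=4\int f'\psi$, yielding $|D^k\psi|\leq C\alpha_1(\delta)$. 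In \emph{Regime 2}, where $\int_L^R(|(T_LF)'|+|(T_LG)'|)\leq\delta^m$, we have $V(L,R)\leq 2\delta^{2m}$ and $|E|\leq 2\alpha_1(\delta)\delta^{2m}$. Fix templates $\eta_1,\eta_2$ with $c_0:=\int_0^1\eta_1'\eta_2\neq 0$, set $\phi(x)=\mu_1\delta^m\eta_1((x-L)/\delta)$ and $\psi(x)=\mu_2\delta^m\eta_2((x-L)/\delta)$, and compute $4\int\phi'\psi=4\mu_1\mu_2\delta^{2m}c_0$. The linear term $4\int(f'\psi-g'\phi)$ is a known bilinear form in $(\mu_1,\mu_2)$ with coefficients bounded by $C\delta^{2m}$. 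Scaling by $\delta^{2m}$, the equation $E=4\int(f'\psi-g'\phi)+4\mu_1\mu_2\delta^{2m}c_0$ becomes a real quadratic in a scalar parameter (after setting e.g.\ $\mu_1=\pm\mu_2$ with a sign compatible with $\mathrm{sgn}(E)$); solving this quadratic gives $|\mu_i|\leq C\sqrt{\alpha_1(\delta)}$ and hence $|D^k\phi|,|D^k\psi|\leq C\sqrt{\alpha_1(\delta)}$. Setting $\beta(\delta):=C\sqrt{\alpha_1(\delta)}$ gives a modulus of continuity covering both regimes.

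The main obstacle is quantitative matching: the Taylor comparison between $(f,g)$ and $(T_LF,T_LG)$ produces error terms that must be dominated by the area correction that our perturbations can achieve under the tight bound $|D^k\phi|,|D^k\psi|\leq\beta(\delta)$. Lemma \ref{Pbig} is the crucial tool, because it guarantees that $(T_LF)'$ (or $(T_LG)'$) is not overly concentrated, so a macroscopic bump paired with it produces a linear area contribution of size comparable to $\int|(T_LF)'|$, large enough to match $|E|$ in Regime 1. The exact equality $E=4\int(f'\psi-g'\phi)+4\int\phi'\psi$ is then realized by continuously varying the amplitude parameter and invoking the intermediate value theorem (with sign controlled by the choice of template).
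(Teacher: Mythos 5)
Your proposal follows essentially the same route as the paper's proof: translate to the origin via the group law, bound $E=\mathcal{A}$ by a modulus times $V$ exactly as in the necessity argument, reduce condition (3) by integration by parts to $4\int_a^b(f'\psi-g'\phi+\phi'\psi)=E$, and split according to whether $\int_a^b|(T_aF)'|+|(T_aG)'|$ dominates $(b-a)^m$, using Lemma~\ref{Pbig} with a scaled bump in the large regime and $\sqrt{\alpha}$-size bumps exploiting the quadratic term $\int\phi'\psi$ in the small regime (your explicit quadratic/sign-choice step, taking the root of small modulus since the product of the roots is $O(\alpha_1)$, is a compressed version of the paper's three subcases and intermediate value theorem in Lemma~\ref{fgsmall}). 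The one point to make explicit is that absorbing the $\int(f'-(T_aF)')\psi$ error in your Regime 1 requires $\alpha_1(b_i-a_i)$ to be below a fixed constant; as in the paper's Remark~\ref{alphabound}, this is arranged by handling the finitely many long complementary intervals separately and enlarging $\beta$ at those lengths.
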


\begin{proof}

This proof will require several lemmas. Fix an interval $[a,b]$ of the form $[a_i,b_i]$ for some $i$.
 Since the interval is fixed, we will write $A, V, \alpha$ instead of $A(a,b), V(a,b), \alpha(b-a)$ respectively. 

\begin{claim}
It suffices to consider the case $F(a)=G(a)=H(a)=0$.
\end{claim}

\begin{proof}
Indeed, suppose this case has been established and $f, g, F, G, H$ are chosen without restriction satisfying \eqref{modulus1F}--\eqref{quotientcvg}. Define $u=f-f(a)$, $v=g-g(a)$ and
define jets $U,V,W$ so that 
\[(U,V,W)=(F(a),G(a),H(a))^{-1}(F,G,H)\]
and $U^k=F^k$ and $V^k=G^k$ for $1 \leq k \leq m$.
The equations for $U,V,W$ are calculated from the group law as in \eqref{horiztranslate} and \eqref{verttranslate}. It is easy to verify that the analogues of \eqref{modulus1F}--\eqref{modcty} hold for $u,v,U,V,W$ since the group law is simply Euclidean addition in the two horizontal directions. 
For example, \eqref{modulus1F} becomes 
\[|U^{k}(x)-D^{k}(T_{a}^{m}U)(x) | \leq \alpha(|x-a|)|x-a|^{m-k}.\]
Now, a simple calculation yields
\begin{align*}
 A(a,b)&:=H(b)-H(a)-2\int_{a}^{b}((T_{a}F)'(T_{a}G)-(T_{a}G)'(T_{a}F))\\
 &\qquad +2F(a)(G(b)-T_{a}G(b))-2G(a)(F(b)-T_{a}F(b)).\\
&= W(b)-2\int_{a}^{b}(T_aU)'(T_aV)-(T_aV)'(T_aU)
\end{align*}
which is the analogue of $A(a,b)$ for $(U,V,W)$. Of course,
\begin{align*}
V(a,b)&= (b-a)^{2m} + (b-a)^{m} \int_{a}^{b}|(T_{a}F)'|+|(T_{a}G)'|\\
&=(b-a)^{2m} + (b-a)^{m} \int_{a}^{b}|(T_{a}U)'|+|(T_{a}V)'|\\
\end{align*}
which is the analogue of $V(a,b)$ for $(U,V,W)$. Hence we obtain also the analogue of \eqref{quotientcvg} for $U,V,W$. If we assume that the proposition has been proven for $u,v,U,V,W$ satisfying the initial condition $U(a)=V(a)=W(a)=0$, then this gives a modulus of continuity $\beta$ and $C^{\infty}$ functions $\phi, \psi\colon [a,b]\to \bbR$ such that
\begin{enumerate}
\item $D^k\phi(a)=D^k\phi(b)=D^k\psi(a)=D^k\psi(b)=0$ for $0\leq k\leq m$.
\item $\max \{|D^k\phi|, |D^k\psi| \}\leq \beta(b-a)$ for $0\leq k\leq m$ on $[a,b]$.
\item $W(b)-W(a)=2\int_{a}^{b}(u+\phi)'(v+\psi)-(v+\psi)'(u+\phi)$.
\end{enumerate}
Simple calculations yield
\[W(b)-W(a)=W(b)=H(b)-H(a)+2F(a)G(b)-2G(a)F(b)\]
and
\begin{align*}
2\int_{a}^{b}(u+\phi)'(v+\psi)-(v+\psi)'(u+\phi)&= 2\int_{a}^{b}((f+\phi)'(g+\psi)-(g+\psi)'(f+\phi))\\
&\qquad  +2f(a)g(b)-2g(a)f(b).
\end{align*}
Hence (3) is transformed into
\[H(b)-H(a)=2\int_{a}^{b}(f+\phi)'(g+\psi)-(g+\psi)'(f+\phi).\]
This is the desired statement for the general curve, 
and the claim is proven.
\end{proof}

Hence we can assume $F(a)=G(a)=H(a)=0$. In this case we have
\[A=H(b)-H(a)-2\int_{a}^{b}((TF)'(TG)-(TG)'(TF))\]
and
\[V= (b-a)^{2m} + (b-a)^{m} \int_{a}^{b}|(TF)'|+|(TG)'|. \]
Define
\[ \mathcal{A}:=H(b)-H(a)-2\int_{a}^{b}(f'g-g'f).\]
Notice Proposition \ref{perturb}(3) can be rewritten as:
\begin{equation}\label{pertforAcal}
2\int_{a}^{b}(f'\psi-\psi'f)+(\phi'g-g'\phi)+(\phi'\psi-\psi'\phi)=\mathcal{A}.
\end{equation}

An argument analogous to the one in Section \ref{sectionnecessary} with $\varepsilon$ replaced by $\alpha$ yields
\[ \left| \int_{a}^{b}(f'g-g'f)-\int_{a}^{b} ((Tf)'Tg-Tf(Tg)') \right| \leq (2\alpha^{2}+(1+8m^2)\alpha)V. %\leq C \hat{\alpha} V
\]
Combining this with \eqref{quotientcvg} shows that
\begin{equation}\label{estforperturb}
|\mathcal{A}| 
= \left| H(b)-H(a)-2\int_{a}^{b} (f'g-g'f) \right| 
\leq C_1 \hat{\alpha} V.
\end{equation}
Here, $\hat{\alpha} = \alpha^2 + \alpha$, and $C_1 \geq 1$ is a fixed constant depending on $m$ which we will refer to later. %In other words, \eqref{estforperturb} asserts that $|\mathcal{A}|\leq C_1 \hat{\alpha} V$. 
Intuitively, \eqref{estforperturb} implies that $f$ and $g$ are no worse than $TF=Tf$ and $TG=Tg$ for the purpose of lifting to give the correct height. However, they have the advantage of being $C^m$ maps defined on the interval $[a,b]$ which satisfy the correct boundary conditions.
Next, %since $\psi$ is zero on the boundary of $[a,b]$ we have
\[\int_{a}^{b}(f'\psi-\psi'f)=2\int_{a}^{b}f' \psi\]
for any $C^{\infty}$ function $\psi$ which vanishes at $a$ and $b$,
and a similar equation holds for the other terms in \eqref{pertforAcal}. 
Hence constructing $\phi$ and $\psi$ which satisfy Proposition \ref{perturb}(3) is equivalent to solving
\begin{equation}
\label{goalint}
4\int_{a}^{b}(\psi f'+\phi g' + \psi \phi')=\mathcal{A},
\end{equation}
where $\mathcal{A}$ satisfies $|\mathcal{A}|\leq C_1 \hat{\alpha} V$. We now show how to do this subject to the constraints Proposition~\ref{perturb}(1) and (2). 

\begin{remark}
\label{alphabound}
Suppose $C_0 > 0$ is a constant 
depending only on $m$ and $\text{diam}(K)$
(the exact value of which will be established at the beginning of Lemma~\ref{fbig}).
Note that, once this constant has been chosen, 
we may assume for the rest of the proof of the proposition that 
$\hat{\alpha} < 1/C_0$. 
Indeed, since $\hat{\alpha}$ is a modulus of continuity and $I$ is bounded, 
there are only finitely many intervals $[a_i,b_i]$ satisfying $\hat{\alpha}(b_i-a_i) \geq 1/C_0$.
Thus, if we are currently considering such an interval, 
we are free to choose $\psi$ and $\phi$ to be any $C^\infty$ functions which satisfy \eqref{goalint} and Proposition~\ref{perturb}(1),
and we may assign $\beta(b_i - a_i)$ to be the maximum over all $|D^i \phi(x)|$ and $|D^i \psi(x)|$ for $0 \leq i \leq m$ and $x \in [a,b]$
(and over all $[a_j,b_j]$ with $b_j-a_j = b_i-a_i$)
to ensure that Proposition~\ref{perturb}(2) holds.
(We may also choose $\beta$ in such a way that it is still an increasing function.)
\end{remark}

We divide the constructions of $\phi$ and $\psi$ into two cases.
In one case, $(Tf)'$ (or $(Tg)'$) is large enough on average
to allow us to create a controlled perturbation of $g$ (or $f$) that encloses the prescribed area (Proposition~\ref{perturb}(3)).
In the other case, both $(Tf)'$ and $(Tg)'$ are small on average, and so $A$ is small as well.
Thus, both $f$ and $g$ may be perturbed slightly to satisfy Proposition~\ref{perturb}(3).
For convenience, we recall
\[V= (b-a)^{2m} + (b-a)^{m} \int_{a}^{b}|(Tf)'|+|(Tg)'|. \]

\begin{lemma}\label{fbig}
Suppose
\begin{equation}
\label{fbigassumption}
\int_{a}^{b} |(Tf)'| \geq \max \left( \int_{a}^{b} |(Tg)'|,\ (b-a)^{m} \right).
\end{equation}
Then there exists a $C^{\infty}$ map $\psi$ on $[a,b]$ satisfying
\begin{enumerate}
\item $D^i\psi(a)=D^i\psi(b)=0$ for $0\leq i\leq m$,
\item $|D^i\psi(x)|\leq (C_1C_0)\hat{\alpha}$ on $[a,b]$,% where $\beta=\beta(b-a)\to 0$ as $b-a\to 0$,
\item $4 \int_{a}^{b} \psi f' =\mathcal{A}$.
\end{enumerate}
\end{lemma}
Hence,
if \eqref{fbigassumption} holds,
we may choose $\phi \equiv 0$ on $[a,b]$.
\begin{proof}
In this case we have 
\begin{equation}
\label{Abound1}
|\mathcal{A}|\leq 3C_1\hat{\alpha}(b-a)^{m}\int_{a}^{b}|(Tf)'|.
\end{equation}
We begin by applying Lemma \ref{Pbig} to the polynomial $(Tf)'$. 
This gives a closed subinterval $I\subset [a,b]$ of length at least $(b-a)/4m^{2}$ such that $|(Tf)'|\geq M/2$ in $I$, where $M=\max_{[a,b]}|(Tf)'|$. 
In partiular, $(Tf)' \neq 0$ on $I$.
By rescaling, translating, and dilating a bump function, we can choose a $C^{\infty}$ map $\eta$ on $[a,b]$ 
and a constant $C_0 \geq 1$ depending only on $m$ and $\text{diam}(K)$
such that
\begin{enumerate}[(a)]
\item $D^i\eta=0$ outside $I$ for $0\leq i\leq m$,
\item $|D^i\eta(x)| \leq C_0 \hat{\alpha}$ on $[a,b]$,
\item $|\eta| \geq 48m^2 \hat{\alpha} \cdot (b-a)^{m}$ on the interval consisting of the middle third of $I$ (which has length at least $(b-a)/12m^{2}$) so that the sign of $\eta$ is the same as the sign of $(Tf)'$ on $I$.%at the point where $|(Tf)'|$ attains its maximum.
\end{enumerate}
Now define $\psi$ on $[a,b]$ by scaling $\eta$ by a constant:
$$
\psi = \left( \frac{\mathcal{A}}{4 \int_a^b \eta f'} \right) \cdot \eta.
$$
In particular, this gives $4 \int_{a}^{b} \psi f' =\mathcal{A}$ which is property (3).
Clearly, $\psi$ satisfies property (1).
It remains to show that $\psi$ satisfies property (2).
To prove this, we must bound $\left| \int_a^b \eta f' \right|$ from below.
We begin with
\begin{align*}
\left| \int_{a}^{b} \eta (Tf)' \right|
=\int_I \eta (Tf)'
\geq \frac{b-a}{12m^2} 48m^2 \hat{\alpha} (b-a)^{m} \frac{M}{2}
&= 2 \hat{\alpha} (b-a)^{m+1}  M \\
&\geq 2 \hat{\alpha} (b-a)^{m} \int_{a}^{b}|(Tf)'|.
\end{align*}
Using $|f'-(Tf)'|\leq \hat{\alpha} (b-a)^{m-1}$ (since $\alpha \leq \hat{\alpha}$) 
and $|\eta|\leq C_0 \hat{\alpha}(b-a)^{m}$ gives
\[\int_{a}^{b}| \eta f' -\eta(Tf)'|
\leq C_0 \hat{\alpha}^2 (b-a)^{2m}
\leq C_0 \hat{\alpha}^2 (b-a)^{m} \int_{a}^{b}|(Tf)'|. \]
This gives
\begin{equation}
\label{ebound}
\left| \int_{a}^{b}\eta f' \right| 
%\geq \left| \int_{a}^{b}\eta (Tf)' \right| - \left| \int_{a}^{b} (\eta (Tf)' - \eta f') \right| 
%= \int_{a}^{b}\eta (Tf)' - \left| \int_{a}^{b} (\eta (Tf)' - \eta f') \right| 
%\geq \frac{\hat{\alpha} (b-a)^{m}}{24m^2} \int_{a}^{b}|(Tf)'| - 2 \alpha \hat{\alpha} (b-a)^{m} \int_{a}^{b}|(Tf)'| =
\geq \hat{\alpha} (b-a)^{m} \int_{a}^{b}|(Tf)'| \left( 2 - C_0 \hat{\alpha} \right)
>\hat{\alpha} (b-a)^{m} \int_{a}^{b}|(Tf)'|
\end{equation}
because of the assumption $\hat{\alpha} < 1/C_0$ from Remark~\ref{alphabound}.
We are now ready to finish the proof of (2).
From \eqref{Abound1} and \eqref{ebound}, we have
$$
|D^i\psi| 
= \frac{|\mathcal{A}|}{4 \left| \int_a^b \eta f' \right|} |D^i \eta|
\leq (C_1C_0)\hat{\alpha}.
$$
\end{proof}

Clearly Lemma \ref{fbig} has a direct analogue involving $\phi$ if 
\begin{equation}
\label{gbigassumption}
\int_{a}^{b} |(Tg)'| \geq \max \left( \int_{a}^{b} |(Tf)'|,\ (b-a)^{m} \right),
\end{equation}
in which case the conclusion is $4 \int_{a}^{b} \phi g' =\mathcal{A}$. 
In this case, we choose $\psi \equiv 0$ on $[a,b]$.
It remains to construct the functions $\phi$ and $\psi$ when \eqref{fbigassumption} and \eqref{gbigassumption} both fail.

\begin{lemma}\label{fgsmall}
Suppose
\[(b-a)^{m} > \max \left( \int_{a}^{b} |(Tf)'|,\ \int_{a}^{b} |(Tg)'| \right).\]
Then there exist $C^{\infty}$ maps $\phi$ and $\psi$ 
and a constant $C_2 > 0$ depending only on $m$ and $\text{diam}(K)$
such that
\begin{enumerate}
\item $D^i\psi(a)=D^i\psi(b)=D^i\phi(a)=D^i\phi(b)=0$ for $0\leq i\leq m$,
\item $\max \{|D^i\psi(x)|, |D^i\phi(x)| \} \leq 18C_2 \sqrt{C_1\hat{\alpha}}$ on $[a,b]$ for $0\leq i\leq m$,
\item $4 \int_{a}^{b} (\psi f' + \phi g' + \psi \phi') =\mathcal{A}$.
\end{enumerate}
\end{lemma}

\begin{proof}
Notice the hypotheses imply
\begin{equation}\label{Abound}
 |\mathcal{A}| \leq C_{1} \hat{\alpha} V \leq 3C_{1} \hat{\alpha} (b-a)^{2m}.
\end{equation}
For simplicity, write $B := 3\sqrt{C_1\hat{\alpha}}$.
Set $C_{2} > 0$ to be a constant (depending only on $m$ and $\text{diam}(K)$) so that 
there exist $C^{\infty}$ functions $\xi$ and $\eta$ on $[a,b]$ 
(which are each a dilated, scaled, translated bump function)
so that 
\begin{enumerate}[(a)]
\item $D^i \xi(a) = D^i \xi(b) = 0$ for $0 \leq i \leq m$,
\item $|D^i\xi| \leq C_2 B$ on $[a,b]$ for $0 \leq i \leq m$,
\item $\xi' \geq B(b-a)^{m-1}$ on a subinterval $I$ of $[a,b]$ with length $(b-a)/3$.
\end{enumerate}
and
\begin{enumerate}[(a)]
\item $D^i \eta$ vanishes outside of $I$ for $0 \leq i \leq m$,
\item $|D^i\eta| \leq C_2 B$ on $[a,b]$ for $0 \leq i \leq m$,
\item $\eta\geq B(b-a)^{m}$ on the middle third of $I$ (which has length $(b-a)/9$).
\end{enumerate}

\textbf{Case 1:}
$\left| \int_{a}^{b} \eta f' \right| \geq  |\mathcal{A}|/24$.

Set $\phi \equiv 0$ on $[a,b]$ and $\psi = \eta\cdot ( \mathcal{A}/(4\int_a^b \eta f'))$. 
Then (1) and (3) are clearly satisfied, 
and we have as before
$$
|D^i\psi| 
= \frac{|\mathcal{A}|}{4 \left| \int_a^b \eta f' \right|} |D^i \eta|
\leq 6C_2B
\quad
\text{ on }
[a,b].
$$

\textbf{Case 2:}
$\left| \int_{a}^{b} \xi g' \right| \geq  |\mathcal{A}|/24$.
This is identical to the previous case when we choose $\psi \equiv 0$ on $[a,b]$ and $\phi = \xi \cdot ( \mathcal{A}/(4\int_a^b \xi g'))$.

\textbf{Case 3:}
$\max \left\{ \left| \int_{a}^{b} \eta f' \right|, \left|\int_{a}^{b} \xi g'  \right| \right\} <  |\mathcal{A}|/24$.

We first have
$$
\int_{a}^{b} \eta \xi' 
= \int_I \eta \xi'
\geq \tfrac19 B^{2}(b-a)^{2m} = C_{1} \hat{\alpha} (b-a)^{2m} \geq \frac{|\mathcal{A}|}{3},
$$
and hence
$ 4\int_{a}^{b} (\eta f' + \xi g' + \eta \xi') > 
|\mathcal{A}|$.
Now consider $\mathcal{F}\colon \bbR\to \bbR$ defined by 
\[\mathcal{F}(\lambda)=4\int_{a}^{b} (\lambda \eta) f' + \xi g' + (\lambda \eta) \xi'. \]
Clearly $\mathcal{F}$ is a continuous map with $\mathcal{F}(0)=4\int_{a}^{b}\xi g' < |\mathcal{A}|/6$ and $\mathcal{F}(1)>|\mathcal{A}|$. Hence, by the intermediate value theorem, there exists $\lambda \in (0,1)$ such that $\mathcal{F}(\lambda)=|\mathcal{A}|$. 
In other words, if we set $\phi = \pm \xi$ and $\psi = \pm \lambda \eta$
(with the appropriate choice of sign),
we have found mappings which satisfy the lemma.
\end{proof}

Setting $\beta(b-a) := \max \{(C_1C_0)\hat{\alpha},18C_2 \sqrt{C_1\hat{\alpha}} \}$ 
completes the proof of Proposition~\ref{perturb}.
\end{proof}

We are now ready to build the $C^m$ horizontal extension of $(F,G,H)$.

\begin{lemma}
\label{buildcurve}
For all $i\geq 1$, there is a horizontal curve $(\mathcal{F}_{i}, \mathcal{G}_{i}, \mathcal{H}_{i})\colon [a_{i},b_{i}]\to \bbH^{1}$ of class $C^m$ which satisfies
\begin{enumerate}
\item for $0\leq k\leq m$,
$$
\begin{array}{ccc}
D^k \mathcal{F}_{i}(a_{i})=F^{k}(a_{i}), & D^k \mathcal{G}_{i}(a_{i})=G^{k}(a_{i}), & D^k \mathcal{H}_{i}(a_{i})=H^{k}(a_{i}) \\
D^k \mathcal{F}_{i}(b_{i})=F^{k}(b_{i}), & D^k \mathcal{G}_{i}(b_{i})=G^{k}(b_{i}), & D^k \mathcal{H}_{i}(b_{i})=H^{k}(b_{i}) 
\end{array}
$$
\item $|D^k \mathcal{F}_{i}(x)-F^{k}(a_{i})|\leq 2\beta(b_{i}-a_i)$ 
and 
$|D^k \mathcal{G}_{i}(x)-G^{k}(a_{i})|\leq 2\beta(b_{i}-a_i)$
for $0\leq k\leq m$ and $x\in [a_{i},b_{i}]$. 
\end{enumerate}
\end{lemma}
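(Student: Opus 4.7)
The plan is to combine the classical Whitney extensions $f, g$ with the perturbations $\phi_i, \psi_i$ supplied by Proposition~\ref{perturb}, then recover $\mathcal{H}_i$ by lifting the resulting planar curve so that the triple is horizontal. More precisely, for each $i$ I would apply Proposition~\ref{perturb} on $[a_i,b_i]$ to obtain $C^\infty$ functions $\phi_i,\psi_i$ satisfying the three properties there, set
\[
\mathcal{F}_i := f + \phi_i, \qquad \mathcal{G}_i := g + \psi_i,
\]
and define $\mathcal{H}_i$ by horizontal lifting:
\[
\mathcal{H}_i(t) := H(a_i) + 2\int_{a_i}^{t} \bigl(\mathcal{F}_i'\,\mathcal{G}_i - \mathcal{G}_i'\,\mathcal{F}_i\bigr).
\]
By Lemma~\ref{lift}, $(\mathcal{F}_i,\mathcal{G}_i,\mathcal{H}_i)$ is then horizontal, and since $\mathcal{F}_i,\mathcal{G}_i \in C^m$, the integrand is $C^{m-1}$, forcing $\mathcal{H}_i \in C^m$.

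Next, I would check the boundary conditions (1). Proposition~\ref{perturb}(1) gives $D^k\phi_i(a_i) = D^k\phi_i(b_i) = D^k\psi_i(a_i) = D^k\psi_i(b_i) = 0$ for $0 \leq k \leq m$, and $D^k f = F^k$, $D^k g = G^k$ on $K$ by the classical Whitney extension theorem, so the conditions for $\mathcal{F}_i$ and $\mathcal{G}_i$ at the endpoints follow immediately. For $\mathcal{H}_i$, the identity $\mathcal{H}_i(a_i) = H(a_i)$ holds by construction, and Proposition~\ref{perturb}(3) delivers $\mathcal{H}_i(b_i) = H(a_i) + (H(b_i) - H(a_i)) = H(b_i)$. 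The matching of the higher derivatives $D^k\mathcal{H}_i$ at the endpoints with $H^k$ for $1 \leq k \leq m$ is the only delicate point: because $(\mathcal{F}_i,\mathcal{G}_i,\mathcal{H}_i)$ is a $C^m$ horizontal curve, \eqref{HigherHoriz} yields
\[
D^k\mathcal{H}_i(t) = \mathcal{P}^k\bigl(\mathcal{F}_i(t),\mathcal{G}_i(t),\ldots,D^k\mathcal{F}_i(t),D^k\mathcal{G}_i(t)\bigr),
\]
and evaluating at an endpoint together with the already-established zeroth through $k$-th order boundary conditions reduces the right-hand side to $\mathcal{P}^k(F^0,G^0,\ldots,F^k,G^k)$ evaluated there, which equals $H^k$ at that point by assumption~(2) of Theorem~\ref{newthm}.

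Finally, property~(2) follows from a triangle inequality:
\[
|D^k\mathcal{F}_i(x) - F^k(a_i)| \leq |D^kf(x) - D^kf(a_i)| + |D^k\phi_i(x)| \leq \alpha(b_i-a_i) + \beta(b_i-a_i) \leq 2\beta(b_i-a_i),
\]
using the modulus-of-continuity bound \eqref{modcty} together with $\beta \geq \alpha$, and analogously for $\mathcal{G}_i$. The genuine difficulty has already been absorbed into Proposition~\ref{perturb} (constructing the height-correcting perturbations within the prescribed derivative bounds); the only conceptual point remaining at the level of this lemma is precisely the invocation of hypothesis~(2) of the theorem, which is exactly what forces the higher-order derivatives of the lift $\mathcal{H}_i$ to agree automatically with the prescribed jet $H$ at the endpoints.
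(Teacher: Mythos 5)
Your proposal is correct and follows essentially the same route as the paper: define $\mathcal{F}_i=f+\phi$, $\mathcal{G}_i=g+\psi$ via Proposition~\ref{perturb}, lift to $\mathcal{H}_i$ starting at $H(a_i)$, match the endpoint derivatives of $\mathcal{H}_i$ through the polynomials $\mathcal{P}^k$ and hypothesis (2), and obtain the bound (2) by the triangle inequality with \eqref{modcty} and $\beta\geq\alpha$. No gaps.
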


\begin{proof}
Fix $i \in \mathbb{N}$.
Set $\mathcal{F}_{i}=f+\phi$ and $\mathcal{G}_{i}=g+\psi$ where $\phi$ and $\psi$ are chosen using Proposition \ref{perturb} for the interval $[a_{i},b_{i}]$ and $f$ and $g$ are the $C^m$ Whitney extensions of $F$ and $G$ respectively chosen earlier.
Define $\mathcal{H}_i$ to be the horizontal lift of $\mathcal{F}_i$ and $\mathcal{G}_i$
with starting height $H(a_i)$:
\begin{equation}
\label{Hlift}
\mathcal{H}_i(x) 
:= H(a_i) + 2\int_{a_i}^x (\mathcal{F}_i'\mathcal{G}_i - \mathcal{F}_i\mathcal{G}_i') 
%= H(a_i) + \int_{a_i}^t \mathcal{P}^1(\mathcal{F}_i,\mathcal{G}_i,\mathcal{F}_i',\mathcal{G}_i') 
\quad \text{for all } x \in [a_i,b_i].
\end{equation}
Differentiating $\mathcal{H}_i$ gives for any $x \in [a_i,b_i]$ and $1 \leq i \leq m$ the equation
%$$ \mathcal{H}_i'(t) = 2(\mathcal{F}_i'(t)\mathcal{G}_i(t) - \mathcal{F}_i(t)\mathcal{G}_i'(t)) = \mathcal{P}^1(\mathcal{F}_i(t),\mathcal{G}_i(t),\mathcal{F}_i'(t),\mathcal{G}_i'(t)).$$ Therefore, 
\begin{equation}
\label{HorizH}
D^k\mathcal{H}_i(x) = \mathcal{P}^k(\mathcal{F}_i(x),\mathcal{G}_i(x),\mathcal{F}_i'(x),\mathcal{G}_i'(x),\dots,D^k \mathcal{F}_i(x),D^k\mathcal{G}_i(x))
\end{equation}
by the definition of the polynomials $\mathcal{P}^k$.

%\textbf{Is the rest of the proof correct? It seems easier than the previous version.}

Clearly $(\mathcal{F}_{i}, \mathcal{G}_{i}, \mathcal{H}_{i})$ is horizontal by definition of the horizontal lift. It is of class $C^m$ because $f,g,\phi,\psi$ are at least $C^m$. 

Fix $1 \leq k \leq m$.
For $D^k \mathcal{F}_{i}(a_{i})$ we observe
\[D^k \mathcal{F}_{i}(a_{i})=D^k f(a_{i})+D^k \phi(a_{i})=F^{k}(a_{i})+0=F^{k}(a_{i})\]
by definition of $f$ and $\phi$. The same argument works for $D^k \mathcal{F}_{i}(b_{i})$, $D^k \mathcal{G}_{i}(a_{i})$, and $D^k \mathcal{G}_{i}(b_{i})$. 
For $D^k \mathcal{H}_{i}(a_{i})$ we calculate as follows using the assumption \eqref{HorizAssume}:
\begin{align*}
D^k \mathcal{H}_i(a_{i})&=\mathcal{P}^k(\mathcal{F}_i(a_i),\mathcal{G}_i(a_i),\mathcal{F}_i'(a_i),\mathcal{G}_i'(a_i),\dots,D^k \mathcal{F}_i(a_i),D^k \mathcal{G}_i(a_i))\\
&=\mathcal{P}^k( F(a_i),G(a_i),F^1(a_i),G^1(a_i),\dots,F^{k}(a_i),G^{k}(a_i))\\
&=H^{k}(a_{i}).
\end{align*}
The same argument shows $D^k \mathcal{H}_{i}(b_{i})=H^{k}(b_{i})$. 
Finally,
$\mathcal{H}_i(a_i) = H(a_i)$ by definition, 
and Proposition~\ref{perturb}(3) together with \eqref{Hlift}
gives $\mathcal{H}_i(b_i) = H(b_i)$.
This proves (1).

For (2) we use the definition of $\phi, \psi$ and \eqref{modcty} to estimate for $t\in [a_{i},b_{i}]$:
\begin{align*}
|D^k \mathcal{F}_{i}(x)-F^{k}(a_{i})| %&\leq |\mathcal{F}_{i}^{(k)}(x)-f^{(k)}(x)| + |f^{(k)}(x)-F^{k}(a_{i})|\\
\leq |D^k \phi(x)| + |D^k f(x)-D^k f(a_{i})|
&\leq \beta(b_{i}-a_i)+\alpha(b_{i}-a_{i})\\
&\leq 2\beta(b_{i}-a_i).
\end{align*}
%making $\beta_{i}$ slightly larger if necessary but maintaining $\beta_{i}\to 0$. 
The same argument also yields $|D^k \mathcal{G}_{i}(x)-G^{k}(a_{i})|\leq 2\beta(b_{i}-a_i)$ which concludes the proof.
\end{proof}

Recall the interval $I=[\min K,\ \max K]$ and the decomposition $I\setminus K = \cup_{i\geq 1} (a_{i},b_{i})$, where the intervals $(a_{i},b_{i})$ are disjoint and $a_{i},b_{i}\in K$.

\begin{proposition}\label{conclusion}
Define the curve $\Gamma = (\mathcal{F},\mathcal{G},\mathcal{H}) \colon I\to \mathbb{H}^{1}$ as follows:
\[ \Gamma(x) := (F(x),G(x),H(x)) \quad \mbox{if }x \in K\]
and
\[\Gamma(x) := (\mathcal{F}_i(x),\mathcal{G}_i(x),\mathcal{H}_i(x)) \quad \mbox{if }x \in (a_i,b_i) \mbox{ for some }i \in \mathbb{N}.\]
Then $\Gamma$ is a $C^{m}$ horizontal curve in $\bbH^{1}$ with
\[D^k \mathcal{F}(x)=F^{k}(x),\qquad D^k \mathcal{G}(x)=G^{k}(x), \qquad D^k \mathcal{H}(x)=H^{k}(x)\]
 for all $x\in K$ and $0\leq k\leq m$.
\end{proposition}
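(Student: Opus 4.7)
The plan is to verify first that $\Gamma$ is $C^m$ on $I$ with the prescribed derivatives on $K$, and then that $\Gamma$ is horizontal. Since $(\mathcal{F}_i,\mathcal{G}_i,\mathcal{H}_i)$ is $C^m$ inside each gap $(a_i,b_i)$ by construction, and Lemma \ref{buildcurve}(1) ensures that derivatives up to order $m$ match the values of $F,G,H$ at both endpoints $a_i,b_i$, the only genuine difficulty is $C^m$-regularity at an arbitrary point $x_0 \in K$ that is an accumulation point of $I \setminus K$.

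To handle this, apply the classical Whitney extension theorem to the jet $H$ (legitimate by hypothesis (1)) to obtain a $C^m$ function $\tilde h \colon I \to \mathbb{R}$ with $D^k \tilde h|_K = H^k$ for $0 \le k \le m$. Together with the background $C^m$ extensions $f,g$ of $F,G$ chosen at the beginning of the proof of Theorem \ref{newthm}, define the error functions
\[
\Phi = \mathcal{F} - f, \qquad \Psi = \mathcal{G} - g, \qquad \Theta = \mathcal{H} - \tilde h.
\]
Each vanishes identically on $K$ and on each gap $(a_i,b_i)$ restricts to a $C^m$ function: $\Phi|_{(a_i,b_i)} = \phi_i$, $\Psi|_{(a_i,b_i)} = \psi_i$, and $\Theta|_{(a_i,b_i)} = \mathcal{H}_i - \tilde h$. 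It suffices to show each of $\Phi,\Psi,\Theta$ is $C^m$ on all of $I$ with every derivative of order $\leq m$ vanishing on $K$; then $\Gamma = (f,g,\tilde h) + (\Phi,\Psi,\Theta)$ is $C^m$ with the required derivatives on $K$.

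For $\Phi$ (and identically for $\Psi$), Proposition \ref{perturb}(1) and (2) supplies the two crucial facts: $D^k \phi_i$ vanishes at both endpoints of $(a_i,b_i)$ and is uniformly bounded by $\beta(b_i - a_i)$. A short induction on $k$ shows that if $\Phi$ is already $C^{k-1}$ with the lower derivatives vanishing on $K$, then $D^k \Phi(x_0) = 0$ and $D^k \Phi$ is continuous at $x_0$. The key estimate is $|D^{k-1} \phi_j(x)| \leq (x - a_j)\beta(b_j - a_j)$, obtained by integrating the sup bound, combined with a case analysis on how $x \in I \setminus K$ approaches $x_0$: either through points of $K$ (where the numerator is zero), or through a gap having $x_0$ as an endpoint (so the gap is fixed and one uses that $D^k \phi_j$ vanishes at that endpoint), or through gaps whose lengths $b_j - a_j$ shrink to $0$ (in which case $a_j$ lies between $x_0$ and $x$, forcing $(x - a_j)/(x - x_0) \leq 1$, after which $\beta(b_j - a_j) \to 0$ finishes the job). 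The analysis for $\Theta$ is the same once we produce a parallel bound $|D^k(\mathcal{H}_i - \tilde h)(x)| \leq C\gamma(b_i - a_i)$ on each gap. To derive this we compare $D^k \mathcal{H}_i(x)$, expanded via the polynomial $\mathcal{P}^k$ using \eqref{HorizH} and Lemma \ref{buildcurve}(2), and $D^k \tilde h(x)$, controlled via \eqref{modulus2} for the jet $H$, to the common Taylor value $D^k(T^m_{a_i} H)(x)$; the Lipschitz character of $\mathcal{P}^k$ on the relevant bounded region, together with the hypothesis (2) identity $H^k(a_i) = \mathcal{P}^k(F(a_i),G(a_i),\dots,F^k(a_i),G^k(a_i))$, produces the modulus $\gamma$.

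Once $\Gamma$ is known to be $C^m$, horizontality is immediate on each gap by construction of $\mathcal{H}_i$ as the horizontal lift of $(\mathcal{F}_i,\mathcal{G}_i)$; at a point $x \in K$ it reduces to
\[
\mathcal{H}'(x) = H^1(x) = 2\bigl(F^1(x) G(x) - G^1(x) F(x)\bigr) = 2\bigl(\mathcal{F}'(x)\mathcal{G}(x) - \mathcal{G}'(x)\mathcal{F}(x)\bigr),
\]
which is precisely hypothesis (2) of Theorem \ref{iff} at $k = 1$. The main obstacle will be obtaining the bound for $\Theta$, since $\mathcal{H}_i$ is built as a horizontal lift rather than as a controlled perturbation of a fixed background function and so its regularity must be accessed indirectly through the polynomials $\mathcal{P}^k$; once that bound is in hand, the remaining arguments are routine bookkeeping with moduli of continuity.
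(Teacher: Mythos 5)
Your proposal is correct in substance but follows a genuinely different route from the paper. The paper never extends the jet $H$; instead it proves regularity of $\Gamma$ directly, by induction on the order of the derivative, estimating one-sided difference quotients of $D^{k-1}\Gamma$ at points of $K$ and splitting them into three pieces: the portion across the gap (bounded via the fundamental theorem of calculus and the uniform closeness of $D^k\mathcal{F}_i, D^k\mathcal{G}_i, D^k\mathcal{H}_i$ to the jet values at $a_{j_i}$, the last via \eqref{HorizH}, hypothesis (2), and Lemma~\ref{buildcurve}(2)), the continuity of the jets on $K$, and the Whitney field property of $(F,G,H)$ on $K$. You instead subtract global $C^m$ background extensions --- including a Whitney extension $\tilde h$ of $H$, which is legitimate since hypothesis (1) includes $H$ --- and reduce the problem to a standard ``flat gluing'' lemma: the error functions $\Phi,\Psi,\Theta$ vanish on $K$, have one-sided derivatives vanishing at the gap endpoints, and have uniformly small derivatives on each gap, hence are $C^m$ with all derivatives up to order $m$ vanishing on $K$. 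The two arguments use exactly the same key control (Lemma~\ref{buildcurve}(2) plus the polynomial identity and hypothesis (2) to pin down $D^k\mathcal{H}_i$), just deployed differently: the paper inside the difference-quotient estimate, you inside the uniform bound for $D^k\Theta$. Your version buys modularity --- the gluing lemma is elementary and reusable, and the Heisenberg-specific input is isolated in the single estimate $|D^k(\mathcal{H}_i-\tilde h)|\leq C\gamma(b_i-a_i)$ --- at the cost of invoking the Whitney extension theorem once more and of some bookkeeping you defer: the case $k=0$ of that estimate is not covered by \eqref{HorizH} and must come directly from the lift formula \eqref{Hlift} together with uniform bounds on $\mathcal{F}_i,\mathcal{G}_i,\mathcal{F}_i',\mathcal{G}_i'$; the Lipschitz bound for $\mathcal{P}^k$ needs the (true) remark that all arguments lie in a fixed compact set because $F^k,G^k$ are bounded on $K$ and $\beta$ is bounded on $[0,\operatorname{diam}I]$; and the claim that non-adjacent gap lengths $b_j-a_j$ tend to $0$ as $x\to x_0$ should be justified by disjointness of the gaps inside the bounded interval $I$ (only finitely many gaps exceed any fixed length). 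None of these is a genuine obstruction, and your horizontality argument (pointwise identity on gaps by construction, on $K$ from hypothesis (2) with $k=1$, valid everywhere since $\Gamma$ is $C^1$) matches the paper's.
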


\begin{proof}
Clearly the curve $\Gamma$ is $C^{m}$ in the subintervals $(a_{i}, b_{i})$. Define maps $\gamma^k$ on $K$ for $0 \leq k \leq m$ by $\gamma^k=(F^k,G^k,H^k)$. 
With this notation we have to show that $\Gamma$ is a $C^m$ horizontal curve and $D^k \Gamma|_{K}=\gamma^{k}$ for $0\leq k\leq m$.

Fix $k \in \{1,\dots,m\}$ and suppose we have shown that $D^{k-1} \Gamma$ exists on $I$ and $D^{k-1} \Gamma|_K = \gamma^{k-1}$.
(In the case $k=1$, this follows from the definition of $\Gamma$.)
In Lemma~\ref{buildcurve}, we showed that a one-sided derivative $D^k \Gamma$ exists from the right at $a_i$ and from the left at $b_i$ 
for every $i \in \mathbb{N}$ and takes the correct value.
Fix $x \in K$ with $x \neq a_i$ for any $i \in \mathbb{N}$ and $x \neq \max  K $. We now prove differentiability of $D^{k-1} \Gamma$ at $x$ from the right.
Suppose $\{x_i\}$ is a decreasing sequence converging to $x$.
We will show that 
\begin{equation}
\label{diffquot}
(x_i - x)^{-1} | D^{k-1} \Gamma(x_i) - D^{k-1} \Gamma(x) - (x_i-x)\gamma^k(x) |
\end{equation}
vanishes as $i \to \infty$.
If $\{x_i\} \subset K$, then \eqref{diffquot} indeed vanishes since $D^{k-1} \Gamma|_K = \gamma^{k-1}$ and $\gamma$ is a triple of Whitney fields on $K$.
If $\{x_i\} \subset I \setminus K$, 
then there exist $j_i\in \bbN$ for every $i\in \bbN$ such that $x_i \in (a_{j_i},b_{j_i})$.
For each $i \in \mathbb{N}$, we may bound \eqref{diffquot} by
\begin{align}
(x_i - x)^{-1} 
| &D^{k-1} \Gamma(x_i) - D^{k-1} \Gamma(a_{j_i}) - (x_i-a_{j_i})\gamma^k(a_{j_i}) | \label{Bound1} \\
&+
(x_i - x)^{-1} 
| (x_i - a_{j_i}) \gamma^k(a_{j_i}) - (x_i-a_{j_i})\gamma^k(x) | \label{Bound2} \\
&+
(x_i - x)^{-1} 
| \gamma^{k-1}(a_{j_i}) - \gamma^{k-1}(x) - (a_{j_i} - x)\gamma^k(x) |. \label{Bound3} 
\end{align}
Since $(x_i - a_{j_i}) < (x_i - x)$,
\eqref{Bound1} may be bounded by 
$$
\frac{1}{x_i - a_{j_i}} \int_{a_{j_i}}^{x_i} \left|D^k \Gamma(t) - \gamma^k(a_{j_i}) \right| \, dt
\leq 
\sup_{t \in (a_{j_i},b_{j_i})} \left|D^k \Gamma(t) - \gamma^k(a_{j_i}) \right|.
$$
Since
\begin{align*}
&|D^k \mathcal{H}(t) - H^k(a_{j_i})| \\
&\quad =
\Big| 
\mathcal{P}^k \left(\mathcal{F}(t),\mathcal{G}(t),\mathcal{F}'(t),\mathcal{G}'(t),\dots,D^k \mathcal{F}(t),D^k \mathcal{G}(t)\right)  \\
& \hspace{1in} - \mathcal{P}^k \left(F(a_{j_i}),G(a_{j_i}),F'(a_{j_i}),G'(a_{j_i}),\dots,F^{k}(a_{j_i}),G^{k}(a_{j_i})\right) 
\Big| 
\end{align*}
for any $t \in (a_{j_i},b_{j_i})$
and since each $\mathcal{P}^k$ is a polynomial,
property (2) in Lemma~\ref{buildcurve}
implies that \eqref{Bound1} vanishes as $(b_{j_i} - a_{j_i}) \to 0$.
The term \eqref{Bound2} vanishes uniformly as well
since $\gamma^k$ is continuous on the compact set $K$,
and \eqref{Bound3} also vanishes
due to the fact that $\gamma$ is a triple of Whitney fields on $K$. 
%If $x_i$ contains both elements of $K$ and elements of $I\setminus K$ then again \eqref{diffquot} vanishes as $i\to \infty$.
We have therefore shown that \eqref{diffquot} vanishes if $\{x_i\}$ is an arbitrary decreasing sequence converging to $x$.
That is, the right-hand derivative of $D^{k-1} \Gamma$ at $x$ is $\gamma^k(x)$.
We may argue similarly using increasing sequences
to show that the left-hand derivative of $D^{k-1} \Gamma$ at any $x \in K$ exists and is equal to $\gamma^k(x)$
(since Lemma~\ref{buildcurve}(2) implies that $\left|D^k \mathcal{F}_{j_i} - F^k(b_{j_i}) \right|$
and $\left|D^k \mathcal{G}_{j_i} - G^k(b_{j_i}) \right|$ will vanish as $(b_{j_i}-a_{j_i}) \to 0$).
That is, $D^k \Gamma$ exists on $I$, and $D^k \Gamma|_K = \gamma^k$.

It remains to prove that $D^m \Gamma$ is continuous on $I$.
The right and left-hand continuity of $D^m \Gamma$ at $a_i$ and $b_i$ respectively 
follow from the fact that $(\mathcal{F}_{i}, \mathcal{G}_{i}, \mathcal{H}_{i})$ is $C^m$ on $[a_i,b_i]$.
%the definition of $\mathcal{F}$ and $\mathcal{G}$,
%from properties (1) and (2) of Proposition~\ref{perturb},
%and from \eqref{Hdefine}.
If $x \in K$ and $x \neq a_i$, we may prove the right-hand continuity of $D^m \Gamma$ 
by taking a decreasing sequence $\{x_i\}$ as before.
Indeed, if $\{x_i\} \subset K$, the continuity of $\gamma^m$ on $K$ gives $D^m\Gamma(x_k) \to D^m \Gamma(x)$,
and, if $\{x_i\} \subset I \setminus K$, then writing
$$
|D^m \Gamma(x_k) - D^m \Gamma(x)| 
\leq 
|D^m \Gamma(x_k) - \gamma^{m}(a_{j_i})| 
+
|\gamma^{m}(a_{j_i}) - \gamma^{m}(x)| 
$$
shows the desired convergence
as in the proof of \eqref{diffquot}.
Arguing in the same way using increasing sequences as before gives the continuity from the left.
Hence $D^m \Gamma$ is continuous on $K$, and therefore $\Gamma$ is indeed $C^m$ on $I$.

Finally, note that $\Gamma$ is horizontal by the hypothesis \eqref{HorizAssume} on $(F,G,H)$ and the fact that 
$(\mathcal{F}_{i}, \mathcal{G}_{i}, \mathcal{H}_{i})$ is horizontal on each subinterval $(a_i,b_i)$.
\end{proof}

This proves Theorem \ref{newthm}. 
\end{proof}

Taken together, Proposition \ref{nec} and Theorem \ref{newthm} prove Theorem \ref{iff}, which is our main result.

\end{document}